\newcolumntype{L}{>{\centering\arraybackslash}m{3cm}}
\theoremstyle{plain}
\newtheorem{theorem}{Theorem}
\newtheorem{lemma}{Lemma}
\theoremstyle{definition} 
\newtheorem{remark}{Remark}
\newcommand{\Amat}{\mathsf{A}}
\newcommand{\VV}{\mathbb{V}}
\newcommand{\EE}{\mathbb{E}}
\newcommand{\RR}{\mathbb{R}}
\newcommand{\Var}{\mathrm{Var}}
\newcommand{\opL}{\mathcal{L}}
\newcommand{\opA}{\mathcal{A}}
\newcommand{\calJ}{\mathcal{J}}
\newcommand{\Cov}{\text{Cov}}
\newcommand{\rd}{\mathrm{d}}
\newcommand{\Sp}{\mathbb{S}}
\title{Online learning in optical tomography: a stochastic approach}
\author{Ke Chen} 
\address{Mathematics Department, University of Wisconsin-Madison, 480 Lincoln Dr., Madison, WI 53705 USA.}
\email{ke@math.wisc.edu}
\author{Qin Li} 
\address{Mathematics Department and Wisconsin Institutes for Discovery, University of Wisconsin-Madison, 480 Lincoln Dr., Madison, WI 53705 USA.}
\email{qinli@math.wisc.edu}
\author{Jian-Guo Liu} 
\address{Department of Mathematics and Department of Physics, Duke University, Durham, NC 27708 USA.}
\email{jliu@phy.duke.edu}
\thanks{The work of K.C. and Q. L. is supported in part by a start-up fund of Q.L. from UW-Madison and National Science Foundation under the grant DMS-1619778. The work of J.L. is supported in part by National Science Foundation under the grant DMS-1514826 and DMS-1107444: RNMS KI-Net.}
\begin{document}
\maketitle

\begin{abstract}
We study the inverse problem of radiative transfer equation (RTE) using stochastic gradient descent method (SGD) in this paper. Mathematically, optical tomography amounts to recovering the optical parameters in RTE using the incoming-outgoing pair of light intensity. We formulate it as a PDE-constraint optimization problem, where the mismatch of computed and measured outgoing data is minimized with same initial data and RTE constraint. The memory and computation cost it requires, however, is typically prohibitive, especially in high dimensional space. Smart iterative solvers that only use partial information in each step is called for thereafter. Stochastic gradient descent method is an online learning algorithm that randomly selects data for minimizing the mismatch. It requires minimum memory and computation, and advances fast, therefore perfectly serves the purpose. In this paper we formulate the problem, in both nonlinear and its linearized setting, apply SGD algorithm and analyze the convergence performance.
\end{abstract}

\section{Introduction}
Optical tomography is a form of computed tomography that extracts tomographic images of objects to be studied using information of light transmitted and scattered through it. It has been vastly used in many applications: in medical imaging near infrared light (NIR) is sent into biological tissues for tumor or bone structure~\cite{Hielscher_rheumatoid1,Hielscher_rheumatoid2}; in outer space studies: during Galileo's travel around Jupiter, pictures are taken by the near infrared mapping spectrometer (NIMS), and scientists recover components of atmosphere on each satellite~\cite{Doute_mapping_so2}. Typically scientists inject a certain amount of light into a bulk of material, and measure the outgoing light intensities at the boundaries. By collecting many such incoming and outgoing light intensity pairs, scientists infer for the optical information of the material.

Mathematically, light is typically characterized by the radiative transfer equation (RTE). It characterizes photon particles that scatter and get absorbed in materials with various optical properties. Optical tomography, therefore is formulated as the inverse problem of the radiative transfer equation. The equation reads:

\begin{equation}\label{eqn:RTE}
v\cdot\nabla_x f + \sigma(x)f = \int_\VV k(x, v,v')f(x, v')\rd{v'}\,,
\end{equation}
where $f(x,v)$, defined on phase space, is the distribution of particles at location $x$ with velocity $v$. Here $x \in \Omega\subset\RR^d$ with $d = 2,3$, and $v \in \VV = \Sp^{d-1}$, the unit sphere in $\RR^d$. $k(x,v,v')$ is the scattering coefficient and it shows the probability of particles moving in direction $v'$ changing to direction $v$ at location $x$, and $\sigma(x)$ is the total absorption coefficient that represents certain amount of photon particles being absorbed by the material. The equation has a unique solution with the following boundary condition:
\begin{equation}\label{eqn:Inflow}
f|_{\Gamma_-} = \phi(x,v)\,,
\end{equation}
where $\Gamma_-$ collects the coordinates on $\partial\Omega$ with incoming velocities (and $\Gamma_+$ collects the outgoings):

\begin{equation*}
\Gamma_\pm =\{(x,v): x\in\partial\Omega, \pm v\cdot n_x >0\}\,.
\end{equation*}

Here $n_x$ stands for the normal direction pointing out of the domain at point $x\in\partial\Omega$. The wellposedness of the equation in the general $L_p$ space has been studied in~\cite{Egger_Schlottbom_Lp}. Define the albedo operator that maps the incoming boundary condition to the outgoing data:

\begin{equation}
\opA: \phi(x,v) \to \left(n_x\cdot v\right) f(x,v)|_{\Gamma_+}\,.
\end{equation}
In the forward problem setting, the optical properties $\sigma$ and $k$ are known and one computes $\left(n_x\cdot v\right) f|_{\Gamma_+}$ for arbitrarily given $\phi$. In the inverse setting, one obtains all possible $(\phi, \left(n_x\cdot v\right) f|_{\Gamma_+})$ pairs and uses them ($\opA$ information) to recover $\sigma$ and $k$.Note there are multiple ways to define $\opA$ depending on the measurements. For example, $\opA$ could map $\phi$ to $f|_{\Gamma_+}$ or the angular averaged measurements $\int (n_x\cdot v)f \rd{v}$.

The problem, due to its large application, has been extensively studied from many aspects. On the analytical side people concern the wellposedness and the stability. More precisely, we ask: 1. does $\opA$ contain enough information to extract all coefficients; 2. how sensitive the recovery is towards the measurements. The first question was initially addressed by a pioneer paper in~\cite{ChoulliStefenov}, in which the authors used the singular decomposition technique to prove the uniqueness of the recovery in 3D if $\sigma$ has no $v$ dependence. This technique was later extended to study angular average data~\cite{Bal09,Bal08} and the case where $\sigma$ has the $v$ dependence~\cite{Stefenov2}. The second question was looked at as early as in~\cite{Wang}, and the bad conditioning was addressed by increasing the modulation frequency in the time-harmonic case~\cite{BalMonard_time_harmonic}. In~\cite{ChenLiWang}, the authors studied the stability's dependence on the Knudsen number and recover, to some extent, the ill-conditioning of the Calder\'on type problem in the diffusion limit. See~\cite{Bal_review} for a review.

On the computation side, different application setups provide different types of measurements, and it drives the development of various numerical techniques~\cite{Arridge_piecewise_const,Arridge_couple,Ren_fPAT,TangHanHan,Egger2015,Prieto_Dorn,Cheng_Gamba_Ren_Doping}. A very general descriptions are found in influential books~\cite{Epstein_MedicalImaging,Natterer_CT}. Generally speaking people regard it as an optimization problem with PDE constraints. More precisely, one tries to minimize the mismatch between the measurements and the numerical results assuming the RTE is satisfied. In this process, $L_2$, $L_1$ or TV norm of the coefficients are added as penalties to fit certain a prior knowledge. The biggest challenge here, of course, is the size of the problem: in every iteration a forward solver is called, and this deals with the distribution function $f$ that lives on phase space and has $N^5$ degrees of freedom in 3D (assuming each direction takes $N$ points). Some techniques have been applied to reduce the cost. This includes using the linearization as an approximation~\cite{Ren_review}, applying gradient-based instead of the Jacobian~\cite{Arridge_gradient} etc. An early review was given by Arridge and Ren~\cite{Arridge99,Ren_review}.

None of the algorithms, however, is online. With traditional approaches, one typically assumes that many experiments are done, and a large number of pairs of $(\phi,(n_x\cdot v) f|_{\Gamma_+})$ are collected ahead of time. These data points are stored and used all-together in the computation as a whole batch. An immediate disadvantage is the run-time memory and computational cost: in each iteration, all experiment measurements are called for to adjust the parameter. We develop online algorithms for inverting RTE in this paper. In particular, we apply the stochastic gradient-descent method. It is a standard online algorithm: we start with one data point (one incoming-outgoing pair), and gradually adjust by incorporating new ones randomly selected from the data pool. This way, in each iteration, only very few data points are required, significantly accelerating the optimization. We stop once error tolerance is achieved. This online routine minimally requires data points, and avoids experiment waste. As will be shown later, numerically it is drastically more efficient too. We have to mention that we are not the very first group to explore the possibility of incorporating the random sampling techniques to inverse problems. The randomized version of the Kaczmarz's method (originally extensively studied in~\cite{Haltmeier07}) was proposed in~\cite{Leitao_Svaiter} for elliptic equations with Dirichlet-to-Neumann map as the data.

In the following, we review the stochastic gradient descent method in section 2, and show the formulation of the inverse problem in both the linearized and the original nonlinear setting in section 3. Section 4 collects our numerical experiments.

\section{Stochastic gradient descent method}\label{sec:SGD}
We briefly review the stochastic gradient descent method in a general setting. The notation is consistent within this section, and will be adjusted accordingly in later sections.

Stochastic gradient descent (SGD) algorithm and many of its variants are often used to solve optimization problems of the form
\begin{equation}
\min \calJ(\sigma) =\frac{1}{N}\sum_{k=1}^N \calJ_k(\sigma)\,,
\end{equation}
where $\calJ$ is average of all $\calJ_k$, which maps the trainable parameters $\sigma\in\RR^d$ to $\RR$. $N$ is the training sample size and could be very large depending on applications. To solve the problem using the standard gradient descent method, one updates $\sigma_n$ for each step, the parameter at $n$-th step, using:
\begin{equation}
\sigma_{n+1} = \sigma_n - \eta\nabla_x \calJ(\sigma_n) = \sigma_n - \frac{\eta}{N}\sum_{k=1}^N\nabla_\sigma \calJ_k(\sigma_n)\,.
\end{equation}
Here $\eta$ is the gradient descent time step, or sometimes termed learning rate. This method requires derivative with respect to $\sigma$ for all $\calJ_k$ evaluated at $\sigma_n$ and the computation could be prohibitively expensive for big $N$.

SGD method is a stochastic alternative of gradient descent method (GD). It replaces the full gradient $\nabla_\sigma \calJ$ by only one sampled version in each iteration. In its simplest form, the SGD iteration is written as
\begin{equation}\label{eqn:SGD}
\sigma_{n+1} = \sigma_n - \eta_n\nabla_\sigma \calJ_{\gamma_n} (\sigma_n)\,,
\end{equation}
where $\eta_n$ is still the learning rate which may or may not vary in $n$. The learning direction is no longer the gradient of the whole cost function but is replaced by that of one sample $\calJ_{\gamma_n}$ randomly chosen from the sample pool ($\{\gamma_n\}$ is a random variable evenly chosen from $\{1, 2, \cdots , N\}$). Per iteration, SGD requires only one sample's derivative in $\sigma$ at $\sigma_n$. Since the computational complexity is much reduced compared with GD, SGD is of favor for many large scale problems~\cite{Bottou2010,Bottou2012}.

There are many works addressing the performance of SGD. Studies were done on quantifying the convergence rate, choosing optimal learning rate, checking condition number dependence, and extending to nonconvex objectives. Many different variants (large batch training, stochastic average gradient, problem in the linear setting, and semi-stochastic method etc.)~\cite{RechtWright_SGD,Roux_exp,Zhang_cond,Mandt_conv,Strohmer2008,Richtarik_sscd} have been studied too for various of purposes. The convergence in the most general setting is still unknown, and several techniques have been employed to explain it~\cite{Bottou2010,Moulines,Needell}. Among them we specifically mention the technique that links SGD algorithm with stochastic partial differential equations (SDEs). The computation of SDE itself also attracts some studies~\cite{Ozen_Bal}.

In fact, if one rewrites SGD as:
\begin{equation}\label{eqn:rewrite}
\sigma_{n+1} - \sigma_n = -\eta\nabla_\sigma \calJ(\sigma_n) + \eta\nabla_\sigma(\calJ-\calJ_{\gamma_n})(\sigma_n)\,,
\end{equation}
with $\eta$ independent on $n$, it could be explained as the discretization for the following SDE:
\begin{equation}\label{eqn:SDE}
\rd{X_t} = b(X_t)\rd t + a(X_t)\rd W_t\,,
\end{equation}
with $\eta$ being the time step, $b(\sigma) = -\nabla_\sigma \calJ(\sigma)$ being the drift, and $a(x) = \left(\eta \Sigma\right)^{1/2}$ is the Brownian motion with the covariance defined by:
\begin{equation}\label{eqn:def_sigma}
\Sigma = \frac{1}{N}\sum_k \left(\nabla \calJ(\sigma) - \nabla \calJ_k(\sigma)\right)\left(\nabla \calJ(\sigma) - \nabla \calJ_k(\sigma)\right)^\top\,.
\end{equation}

This observation was made rigorous in~\cite{LiTaiE_SGD}, and we cite the theorem here:
\begin{theorem}
Let $T > 0$ and define $\Sigma$ as in~\eqref{eqn:def_sigma}. Assume $\calJ$, $\calJ_k$ are Lipschitz continuous, have at most linear asymptotic growth and have sufficiently high derivatives. Then,
the stochastic process $X_t$ with $t\in[0,T]$ satisfying
\begin{equation}
\rd X_t = -\nabla \calJ(X_t)\rd t + (\eta \Sigma(X_t))^{1/2} \rd W_t
\end{equation}
is an order 1 weak approximation of the SGD, meaning: for every $g$ of polynomial growth, there exists $C > 0$, independent of $\eta$, such that for all $n = 0,1,...,n_T = T/\eta$,
\begin{equation}
|\EE g(X_{n\eta}) - \EE g(\sigma_n)| < C\eta\,.
\end{equation}
Here $X_{n\eta}$ is the solution to the SDE~\eqref{eqn:SDE} evaluated at $n\eta$ and $\sigma_n$ is the $n$-th iteration solution to the SGD algorithm~\eqref{eqn:SGD}.
\end{theorem}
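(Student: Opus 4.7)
My plan is to apply the classical weak-error framework of Milstein / Talay--Tubaro, which compares the one-step moments of the discrete scheme (here SGD) against those of the continuous process (here the SDE), and then propagates the local error to a global one through the backward Kolmogorov equation associated with \eqref{eqn:SDE}. Concretely, I define $u(t,x) = \EE[g(X_T)\mid X_t = x]$, so that $u$ solves the backward Kolmogorov PDE $\partial_t u + \opL u = 0$ on $[0,T]$ with terminal data $u(T,x)=g(x)$, where $\opL$ is the generator of \eqref{eqn:SDE}. Using the Markov property of the SDE and a telescoping identity, the global weak error can be written
\begin{equation*}
\EE g(\sigma_{n_T}) - \EE g(X_T) \;=\; \sum_{k=0}^{n_T-1}\EE\bigl[u((k{+}1)\eta,\sigma_{k+1}) - u((k{+}1)\eta, X^{k\eta,\sigma_k}_{(k+1)\eta})\bigr],
\end{equation*}
so it suffices to control each summand by $O(\eta^2)$ uniformly, since $n_T = T/\eta$.

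The second step is the one-step moment comparison. Conditional on $\sigma_n = x$, SGD gives the deterministic-size increment $\sigma_{n+1}-\sigma_n = -\eta\nabla\calJ_{\gamma_n}(x)$, whence
\begin{equation*}
\EE[\sigma_{n+1}-\sigma_n\mid x] = -\eta\nabla\calJ(x), \qquad \EE[(\sigma_{n+1}-\sigma_n)(\sigma_{n+1}-\sigma_n)^\top\mid x] = \eta^2\bigl(\Sigma(x) + \nabla\calJ(x)\nabla\calJ(x)^\top\bigr),
\end{equation*}
and higher absolute moments of order $p$ are $O(\eta^p)$. On the SDE side, an It\^o--Taylor expansion of $X^{t,x}_{t+\eta} - x$ with drift $-\nabla\calJ$ and diffusion $(\eta\Sigma)^{1/2}$ yields a conditional mean $-\eta\nabla\calJ(x)+O(\eta^2)$ and a second moment $\eta\cdot\eta\Sigma(x)+O(\eta^2) = \eta^2\Sigma(x)+O(\eta^2)$, with higher absolute moments again $O(\eta^p)$ because the diffusion has magnitude $O(\sqrt\eta\cdot\sqrt\eta)=O(\eta)$. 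Comparing, the first moments match exactly to $O(\eta^2)$ and the second moments differ only by the $O(\eta^2)$ piece $\eta^2\nabla\calJ(x)\nabla\calJ(x)^\top$. Thus the local error satisfies the Milstein--Talay--Tubaro criterion for weak order one.

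The third step is to combine these two pieces. Taylor-expanding $u((k{+}1)\eta,\cdot)$ around $\sigma_k$ to third order and taking conditional expectation of both the SGD and SDE increments produces terms in which the first- and second-order coefficients cancel by the moment match above, leaving a remainder controlled by (i) the $O(\eta^2)$ discrepancy in the second moments multiplied by $\partial^2_x u$, and (ii) third-order Taylor remainders multiplied by third moments of the increments, both of which are $O(\eta^2)$ in $L^1$ provided $\partial^\alpha_x u$ up to $|\alpha|\le 4$ have at most polynomial growth and provided the iterates $\sigma_k$ admit moment bounds uniform in $k\le n_T$. Summing over $k$ yields the claimed $O(\eta)$ weak error.

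The main obstacle, and the technical heart of the argument, will be justifying the two regularity ingredients just named. Uniform-in-$k$ polynomial moment bounds for $\sigma_k$ (and analogously for $X_t$) follow from the Lipschitz and linear-growth hypotheses on $\nabla\calJ, \nabla\calJ_k$ via a discrete Gr\"onwall estimate, provided the learning rate $\eta$ is bounded. The more delicate piece is establishing that $u(t,x)$ is smooth in $x$ with derivatives of polynomial growth on $[0,T]\times\RR^d$; this is where the assumption ``$\calJ, \calJ_k$ have sufficiently high derivatives'' is used, together with standard SDE flow-differentiation estimates (Kunita-type bounds on $\partial_x X^{t,x}_s$) to differentiate under the expectation in $u(t,x)=\EE[g(X^{t,x}_T)]$. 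Once these regularity estimates are in hand, the telescoping argument outlined above closes the proof.
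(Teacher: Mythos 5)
This theorem is quoted in the paper from the reference \cite{LiTaiE_SGD} and the paper itself supplies no proof, so there is no internal argument to compare against; your outline is, however, essentially the strategy of that cited source: a Milstein/Talay--Tubaro one-step moment comparison propagated through the backward Kolmogorov equation, with the regularity of $u(t,x)=\EE[g(X^{t,x}_T)]$ supplied by the smoothness and growth hypotheses on $\calJ,\calJ_k$. Your moment computations are correct (note only that the It\^o--Taylor expansion of the SDE increment also produces the drift-squared term $\eta^2\nabla\calJ\nabla\calJ^\top$ in its second moment, so the two second moments in fact agree to $O(\eta^3)$ rather than merely differing by an $O(\eta^2)$ piece; either way the weak-order-one criterion is met), and the two technical ingredients you flag --- uniform polynomial moment bounds on the iterates and polynomial-growth bounds on $\partial_x^\alpha u$ up to the required order --- are exactly the points where the full argument in \cite{LiTaiE_SGD} spends its effort.
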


Consider the connection between SDE and the Fokker-Planck equation, the rewrite of the scheme~\eqref{eqn:rewrite} can also be regarded as the discretization for:
\begin{equation}
\partial_tu = b(x)\cdot\nabla u + \frac{1}{2}\eta\Sigma:\nabla^2u\,.
\end{equation}
and this was made rigorous in~\cite{FengLiLiu_SGD} by using a small jump approximation in Markov process.

These results essentially claim that the SGD results can be interpreted by the solution to the SDE and the Fokker-Planck. Once the connection is drawn, the analysis to the SDE could be carried to understand the convergence behavior of SGD. Indeed, the equation contains a drift term and a diffusion term, in charge of bringing two types of behaviors. Suppose the initial guess is far away from the optimal and $\nabla_\sigma \calJ$ is very big, then the drift term will dominate. The solution therefore will firstly move according to the direction given by the drift term and quickly converge to a state to have $\nabla_\sigma \calJ =0$. Once the drift term is small enough, the diffusion term will dominate, and this gives a Brownian motion like oscillating behavior. The two phases are termed the descent phase and the fluctuations phase, and the transition time is usually determined by setting $\EE(X_t) = \sqrt{\Var(X_t)}$.

The solution to the SDE could be made more explicit when $\eta$, the learning rate is small. In the zero limit of $\eta$, the diffusion term shrinks. By performing the standard asymptotic expansion in $\eta$ to~\eqref{eqn:SDE}, the solution to the SDE, in the leading order, becomes:
\begin{equation}
X_t \sim\mathcal{N}(X_{0,t},\eta S_t)\,,
\end{equation}
a Gaussian process centers at $X_{0,t}$, a deterministic process that satisfies:
\begin{equation*}
\frac{\rd}{\rd t}X_{0,t} = -\nabla \calJ(X_{0,t})\,,
\end{equation*}
with fluctuation $S_t$ governed by:
\begin{equation}
\frac{\rd}{\rd t}S_t = -S_tH_t - H_tS_t +\Sigma_t\,.
\end{equation}
Here $H_t = \nabla^2\calJ(X_{0,t})$ is the Hessian of $\calJ$ evaluated at $X_{0,t}$, and $\Sigma_t = \Sigma(X_{0,t})$, with $\Sigma$ defined in~\eqref{eqn:def_sigma}. The interested readers are referred to~\cite{LiTaiE_SGD} for more details.

\section{Inverting for optical properties of RTE}
We apply SGD to the inverse problem in RTE. We first unify the notation. We focus on the critical case in this paper, meaning the absorption and the scattering term have the same intensities. The method takes minimum changes when the two terms are different. The calculation will be presented in Remark~\ref{rmk:absorption} and numerical experiments will be demonstrated in Section 4. The equation writes, in 2D:
\begin{equation*}
\begin{cases}
v\cdot\nabla f = \sigma(x_1,x_2)\opL[f]\,,\quad x=(x_1,x_2)\in[0,1]^2, v\in\mathbb{S}\\
f|_{\Gamma_-} = \phi(x_1,x_2,v)
\end{cases}\,,
\end{equation*}
where $\opL[f]$ is the collision term:
\begin{equation*}
\opL[f] = \int_{\mathbb{S}}f\rd{v} - f = \langle f\rangle_v - f\,.
\end{equation*}
Here $\rd{v}$ is a normalized measure. If we write $v=(\cos\theta,\sin\theta)$ then:
\begin{equation}\label{eqn:rte_2d}
\begin{cases}
\cos\theta\partial_{x_1} f + \sin\theta\partial_{x_2} f = \sigma(x_1,x_2)\opL[f]\,,\quad (x_1,x_2,\theta)\in[0,1]^2\times[-\pi,\pi]\\
f|_{\Gamma_-} = \phi(x_1,x_2,\theta)
\end{cases}\,.
\end{equation}
In the equation $\Gamma_-$ collects coordinates on the four boundary lines with velocities pointing into the domain:
\begin{equation*}
\begin{aligned}
\Gamma_- =&  ~\{x_1=0,x_2\in[0,1],\cos\theta>0\}\cup \{x_1=1,x_2\in[0,1],\cos\theta<0\} \\
				   &\cup \{x_1\in[0,1],x_2=0,\sin\theta>0\} \cup\{x_1\in[0,1],x_2=1,\sin\theta<0\}\,,
\end{aligned}
\end{equation*}
and $\Gamma_+$ collects the rest.

For every run of the experiment, one turns on light supported on $\Gamma_-$ with prescribed intensities, termed $\phi^{(k)}$ and collects outgoing intensities, termed $\psi^{(k)}$. We note that $\psi^{(k)}$ contains pollution in the measuring procedure. The superindex $k$ labels the round of experiment.

Throughout the section we may encounter the following norms:
\begin{equation*}
\|f\|^2_\pm = \int_{\Gamma_\pm} |f|^2\rd{x}\rd{v}\,,\quad \|f\|^2_2 = \int_{\Omega\times\mathbb{S}}|f|^2\rd{x}\rd{v}\,.
\end{equation*}

The following two subsections are devoted to nonlinear and linearized versions of the inverse problem, both of which employ dual problems for extracting information.

\subsection{Nonlinear version}
We look for the scattering coefficient $\sigma(x_1,x_2)$ in the nonlinear setting in this section. This is achieved by matching the result of the albedo operator acting on the incoming data $\phi^{(k)}$ and the measured data $\psi^{(k)}$. More precisely we perform the PDE-constraint optimization. Define the cost function:
\begin{equation}\label{eqn:cost}
\calJ_k = \frac{1}{2}\|(n\cdot v) f^{(k)} - \psi^{(k)}\|^2_+ + \frac{\alpha}{2}\|\sigma\|^2_2\,.
\end{equation}
and the PDE constraint:
\begin{equation}\label{eqn:forward}
(v\cdot\nabla -\sigma\mathcal{L})f^{(k)} =0\,,\quad f^{(k)}|_{\Gamma_-} = \phi^{(k)}\,,
\end{equation}
then we minimize:
\begin{equation}\label{eqn:opt_nonlinear}
\begin{cases}
\min_{\sigma} \quad&\frac{1}{N}\sum_k \calJ^{(k)} = \frac{1}{N}\sum_k\left(\frac{1}{2}\|(n\cdot v)f^{(k)} - \psi^{(k)}\|^2_+ + \frac{1}{2}\alpha\|\sigma\|^2_2\right)\\
\text{s.t.}\quad &v\cdot\nabla f^{(k)} = \sigma(x_1,x_2)\opL[f^{(k)}]\,,\quad f^{(k)}|_{\Gamma_-} = \phi^{(k)}
\end{cases}\,.
\end{equation} 
A more compact form of the problem writes:
\begin{equation}
\min_{\sigma} \quad \frac{1}{N}\sum_k\left(\frac{1}{2}\|\opA(\sigma)[\phi^{(k)}] - \psi^{(k)}\|^2_+ + \frac{1}{2}\alpha\|\sigma\|^2_2\right)\,.
\end{equation} 
where $\opA$ is the albedo operator determined by $\sigma$ that maps the incoming data $\phi$ to the outgoing data $(n\cdot v)f|_{\Gamma_+}$ with $f$ satisfying~\eqref{eqn:forward}. A Kolmogorov regularizer $\|\sigma\|_2$ is added. Both the mismatch term and regularization term are measured in $L_2$ norm. Note that the data is of the form of $(n\cdot v)f|_{\Gamma_+}$ but not $f|_{\Gamma_+}$.

The update formula given by SGD is straightforward:
\begin{equation}\label{eqn:update}
\sigma_{n+1} = \sigma_n - \eta_n\frac{\rd }{\rd\sigma}\calJ_{\gamma_n}(\sigma_n)\,,
\end{equation}
with $\gamma_n$ randomly selected from $\{1\,,\cdots,N\}$. This means in each iteration, to update $\sigma$ from time step $n$ to $n+1$, one randomly select a incoming-outgoing pair $(\phi^{(\gamma_n)},\psi^{(\gamma_n)})$ and use the corresponding Fr\'echet derivative $\frac{\rd }{\rd\sigma}\calJ_{\gamma_n}$ evaluated at the previous data $\sigma_n$. To compute the Fr\'echet derivative, however, we need to employ the dual problem. We now derive it, and ignore sub-index $\gamma_n$ for conciseness of the notation.

We use the Lagrangian formulation. For all independent $f$, $\sigma$ and the duals $g$ and $\lambda$, we define the Lagrangian:
\begin{equation}
\mathsf{L}(\sigma,f,g,\lambda) = \calJ(\sigma,f) + \langle g\,,(v\cdot\nabla_x-\sigma\opL)f\rangle_2 +\langle\lambda\,, f|_{\Gamma_-}-\phi\rangle_-\,,
\end{equation}
with the last two terms coming from multiplying the two constraints (the equation and the boundary condition) by the Lagrangian multiplier $(g,\lambda)$. If the two constraints in~\eqref{eqn:forward} are satisfied, $f$ and $\sigma$ are no longer independent, and the last two terms disappear. On this special manifold, the Lagrangian is equivalent to $\calJ$. We denote such $f$ by $f_\sigma$. On $f = f_\sigma$ manifold:
\begin{equation}
\mathsf{L}(\sigma,f_\sigma,g,\lambda) = \calJ(\sigma,f_\sigma)\,.
\end{equation}
Take derivative with respect to $\sigma$:
\begin{equation*}
\frac{\rd\calJ}{\rd\sigma} = \frac{\partial\mathsf{L}}{\partial\sigma}+  \frac{\partial\mathsf{L}}{\partial f}\frac{\partial f}{\partial\sigma}\,.
\end{equation*}
Suppose $g$ and $\lambda$ are selected properly to make $\frac{\partial\mathsf{L}}{\partial f} = 0$, then:
\begin{equation}\label{eqn:Frechet}
\frac{\rd\calJ}{\rd\sigma} = \frac{\partial\mathsf{L}}{\partial\sigma} = \frac{\partial\calJ}{\partial\sigma} - \int_{\Sp} g\opL[f]\rd{v} = \alpha\sigma -\int_{\Sp} g\opL[f]\rd{v}\,,
\end{equation}
a formulation that could be explicitly computed.

To have $\frac{\partial\mathsf{L}}{\partial f} = 0$, we note that
\begin{align*}
\frac{\partial\mathsf{L}}{\partial f} &= \frac{\partial \calJ}{\partial f} + \frac{\partial}{\partial f}\langle g\,,(v\cdot\nabla_x-\sigma\opL) f\rangle_{\Omega\times\mathbb{S}} + \frac{\partial}{\partial f}\langle \lambda\,, f|_{\Gamma_-}-\phi\rangle_-\\
&=\frac{\partial}{\partial f}\left[ \frac{1}{2}\langle (v\cdot n) f-\psi\,,(v\cdot n) f-\psi\rangle_+ + \langle g\,,(v\cdot\nabla_x-\sigma\opL) f\rangle_{\Omega\times\mathbb{S}} + \langle \lambda\,, f|_{\Gamma_-}-\phi\rangle_-\right]\\
&=\frac{\partial}{\partial f}\left[ \frac{1}{2}\langle (v\cdot n) f-\psi\,,(v\cdot n) f-\psi\rangle_+ + \langle (v\cdot n)g\,,f\rangle_+ + \langle (-v\cdot\nabla_x-\sigma\opL) g\,,f\rangle_{\Omega\times\mathbb{S}} + \langle \lambda\,, f|_{\Gamma_-}-\phi\rangle_- + \langle (v\cdot n)g\,,f\rangle_-\right]
\end{align*}
where in the last equation we have used:
\begin{equation*}
\langle g\,,(v\cdot\nabla_x-\sigma\opL) f\rangle_{\Omega\times\mathbb{S}} =  \langle (-v\cdot\nabla_x-\sigma\opL)g\,, f\rangle_{\Omega\times\mathbb{S}} + \int_{\Gamma_+\cup\Gamma_-}(n\cdot v)fg\rd{x}\rd{v}\,.
\end{equation*}

We combine terms supported in different domains, and let them vanish:
\begin{align}\label{eqn:lagrangian_constraints}
\begin{cases}
(-v\cdot\nabla_x-\sigma\opL)g = 0\,,\quad &(x,v)\in\Omega\times\mathbb{S}\\
(n\cdot v)f - \psi + g = 0\,, \quad &(x,v)\in\Gamma_+\\
\lambda + (n\cdot v)g = 0\,,\quad &(x,v)\in\Gamma_-
\end{cases}\,.
\end{align}

The first two equations combined provide the restriction of $g$, i.e. $g$ satisfies the dual problem:
\begin{equation}\label{eqn:dual}
\begin{cases}
-v\cdot\nabla g = \sigma\opL[g]\\
g|_{\Gamma_+} = -(n\cdot v)f|_{\Gamma_+} + \psi
\end{cases}\,.
\end{equation}

In each iteration, to update~\eqref{eqn:update}, we compute~\eqref{eqn:dual} with the current guess $\sigma_n$ for $g$ using the mismatch being the boundary condition, and then generate the Fr\'echet derivative using~\eqref{eqn:Frechet}. We summarize the procedure in Algorithm~\ref{alg:nonlinear}.
\RestyleAlgo{boxruled}
\begin{algorithm}
\SetAlgoLined
\KwData{$N$ experiments with \begin{itemize}\item[1.] incoming data $\{\phi^{(k)}\}$;\item[2.] outgoing measurements: $\{\psi^{(k)}\}$;\item[3.] error tolerance $\varepsilon$;\item[4.] initial guess $\sigma_0$.\end{itemize}}
\KwResult{The minimizer $\sigma$ to the optimization problem~\eqref{eqn:opt_nonlinear} that is within $\varepsilon$ accuracy in residue.}
\While{$\|\frac{\rd}{\rd \sigma}\calJ_{\gamma_{n}}(\sigma_n)\|>\varepsilon$}{
Step I: randomly pick $\gamma_n\in\{1\,,\cdots,N\}$;\\
Step II: compute the forward problem~\eqref{eqn:forward} using boundary $\phi = \phi^{(\gamma_n)}$ with $\sigma = \sigma_n$ for $f^{(\gamma_n)}$;\\
Step III: compute the dual problem~\eqref{eqn:dual} using boundary $-(v\cdot n)f^{(\gamma_n)}|_{\Gamma_+} + \psi^{(\gamma_n)}$ with $\sigma = \sigma_n$ for $g^{(\gamma_n)}$;\\
Step IV: compute the Fr\'echet derivative~\eqref{eqn:Frechet}: $\frac{\rd}{\rd\sigma}\calJ_{\gamma_n}(\sigma_n) = \alpha\sigma_n - \int_{\Sp^1} \opL[f^{(\gamma_n)}]g^{(\gamma_n)}\rd{v}$;\\
Step V: update using~\eqref{eqn:update}: $\sigma_{n+1} = \sigma_n - \eta\frac{\rd}{\rd\sigma}\calJ_{\gamma_n}(\sigma_n)$.\\
$n=n+1$.
}
\caption{Find solution to the minimization problem~\eqref{eqn:opt_nonlinear}}\label{alg:nonlinear}
\end{algorithm}

We emphasize that for clinic interests, $N$ data points $\{\phi^{(k)},\psi^{(k)}\}$ do not need to be prepared beforehand. Before converging, in each step, an NIR laser is randomly placed on $\Gamma_-$ to generate $\phi^{(k)}$ and recerivers are placed on $\Gamma_+$ to collect $\psi^{(k)}$. Experiments are stopped once the algorithm gives convergence.  In this way, no redundant information is collected and this online algorithm maximally saves the experimenting time.

\begin{remark}\label{rmk:absorption}
It is of clinical interests that sometimes the equation~\eqref{eqn:rte_2d} is not in the critical case and the total absorption term is different from the scattering case. For simplicity we set the scattering being $1$ and study here how to recover the absorption term. The equation writes
\begin{equation}\label{eqn:rte_ab}
v\cdot \nabla_xf = \mathcal{L}f -\sigma f
\end{equation}
with boundary condition
\begin{equation*}
f|_{\Gamma_-} = \phi\,.
\end{equation*}
And the goal is to use the information of $\opA$ to recover $\sigma$. The minimization form writes as:
\begin{equation*}
\begin{cases}
\min_{\sigma} \quad&\frac{1}{N}\sum_k \calJ_k = \frac{1}{N}\sum_k\left(\frac{1}{2}\|(n\cdot v)f^{(k)} - \psi^{(k)}\|^2_+ + \frac{1}{2}\alpha\|\sigma\|^2_2\right)\\
\text{s.t.}\quad &v\cdot\nabla f^{(k)} =\opL[f^{(k)}] -\sigma(x_1,x_2)f^{(k)}\,,\quad f^{(k)}|_{\Gamma_-} = \phi^{(k)}
\end{cases}\,.
\end{equation*}
Following the same procedure, for all $k$, the Lagrangian is defined:
\begin{equation*}
\mathsf{L}(\sigma,f,g,\lambda) = \calJ(\sigma,f) + \langle g\,,(v\cdot\nabla_x-\opL+\sigma)f\rangle_2 +\langle\lambda\,, f|_{\Gamma_-}-\phi\rangle_-\,,
\end{equation*}
with the last two terms coming from the Lagrangian multiplier $(g,\lambda)$. On $f=f_\sigma$ manifold, the two terms drop and the Lagrangian is equivalent to $\calJ$, and:
\begin{equation}
\mathsf{L}(\sigma,f_\sigma,g,\lambda) = \calJ(\sigma,f_\sigma)\,.
\end{equation}
Take derivative with respect to $\sigma$:
\begin{equation*}
\frac{\rd\calJ}{\rd\sigma} = \frac{\partial\mathsf{L}}{\partial\sigma}+  \frac{\partial\mathsf{L}}{\partial f}\frac{\partial f}{\partial\sigma}=\alpha\sigma +\int_{\Sp} gf\rd{v}\,.
\end{equation*}
In the second equation we purposely select $g$ and $\lambda$ to have $\frac{\partial\mathsf{L}}{\partial f} = 0$. This requires:
\begin{align*}
\begin{cases}
(-v\cdot\nabla_x-\opL)g +\sigma g= 0\,,\quad &(x,v)\in\Omega\times\mathbb{S}\\
(n\cdot v)f - \psi + g = 0\,, \quad &(x,v)\in\Gamma_+\\
\lambda + (n\cdot v)g = 0\,,\quad &(x,v)\in\Gamma_-
\end{cases}\,.
\end{align*}
Once again the first two equations combined provide the restriction of $g$, the dual equation:
\begin{equation}\label{eqn:rte_ab_dual}
\begin{cases}
-v\cdot\nabla g = \opL[g] - \sigma g\\
g|_{\Gamma_+} = -(n\cdot v)f|_{\Gamma_+} + \psi
\end{cases}\,.
\end{equation}
In conclusion, to use SGD, we use the following in each iteration:
\begin{equation*}
\sigma_{n+1} = \sigma_n - \eta_n \frac{\rd}{\rd\sigma}\calJ_{\gamma_n} = \sigma_n -\eta_n\left(\alpha\sigma_n +\int_{\Sp} gf\rd{v}\right)\,,
\end{equation*}
where $f$ solves~\eqref{eqn:rte_ab} with $\phi^{(\gamma_n)}$ being the boundary condition and $\sigma_n$ being the media, and $g$ solves~\eqref{eqn:rte_ab_dual} with $\psi^{(\gamma_n)}$ and $\sigma_n$.
\end{remark}

\subsection{Linearized procedure}
In this section we describe the SGD applied on the linearized problem. The linearization is conducted upon $\sigma_0$, a background scattering coefficient believed to be very close to the true $\sigma$. The equation reads:
\begin{equation}\label{eqn:f_original}
\begin{cases}
v \cdot \nabla_x f&=\sigma\mathcal{L}f\,, \qquad (x, v) \in \Omega \times \mathbb{S}\,,  \\
f|_{\Gamma_-}&=\phi
\end{cases}\,,
\end{equation}
and its linearization is conducted assuming:
\begin{equation*}\label{eqn:sigma_tilde}
\tilde{\sigma}(x) = \sigma(x) - \sigma_{0}(x)\,\quad\text{and}\quad |\tilde{\sigma}|\ll |\sigma|\;\quad(a.e.)\,.
\end{equation*}
Then the linearized problem with the same inflow boundary condition reads as
\begin{equation}\label{eqn:f0}
\begin{cases}
v\cdot \nabla_x f_0=\sigma_0\mathcal{L}f_0\,, \qquad (x, v) \in \Omega \times \mathbb{S}\,,  \\
f_0|_{\Gamma_-}=\phi
\end{cases}\,.
\end{equation}
Let
\begin{equation*}
\tilde{f}(x,v) = f(x,v) - f_0(x,v)
\end{equation*}
be the fluctuation, we subtract the two equations~\eqref{eqn:f_original} and~\eqref{eqn:f0} for:
\begin{equation}\label{eqn:fluc_linear}
\begin{cases}
v\cdot \nabla_x \tilde{f}=\sigma_0\mathcal{L}\tilde{f}+\tilde{\sigma}\mathcal{L}f_0 \\
\tilde{f}|_{\Gamma_-}=0
\end{cases}\,,
\end{equation}
where we have omitted the higher order term $\tilde{\sigma}\mathcal{L}\tilde{f}$. To extract information to match the given data, we once again use the dual problem. Suppose we would like to find the information at $(x_\ast,v_\ast)\in\Gamma_+$, then we assign a delta function at the point for $g$ to use as the boundary condition:
\begin{equation}\label{eqn:dual_linear}
\begin{cases}
-v\cdot \nabla_x g=\sigma_0\mathcal{L}g\\
g|_{\Gamma_+}=\delta_{x_\ast,v_\ast}(x,v)
\end{cases}\,.
\end{equation}
Multiply~\eqref{eqn:dual_linear} by $\tilde{f}$ and multiply~\eqref{eqn:fluc_linear} by $g$ and subtract them, we get
\begin{equation}
(n_\ast\cdot v_{x_\ast}) \tilde{f}(x_\ast,v_\ast) = \int_\Omega \tilde{\sigma} \int_{\Sp^1} \mathcal{L}[f_0]g\rd{v}\rd{x}\,,
\end{equation}
Note the left hand side is known since:
\begin{equation}
(n_\ast\cdot v_{x_\ast})\tilde{f}(x_\ast,v_\ast) = (n_\ast\cdot v_{x_\ast})f(x_\ast,v_\ast)-(n_\ast\cdot v_{x_\ast})f_0(x_\ast,v_\ast)
\end{equation}
with the first term being a measurement $\psi(x_\ast,v_\ast)$, and the second computed from~\eqref{eqn:f0}. We denote it by:
\begin{equation} \label{eqn:b}
b(x_\ast,v_\ast;\phi):= (v_\ast\cdot n_{x_\ast}) \tilde{f}(x_\ast,v_\ast) =  \psi(x_\ast,v_\ast) - (v_\ast\cdot n_{x_\ast})f_0(x_\ast,v_\ast;\phi)\,,
\end{equation}
with $f_0$ implicitly depend on the inflow $\phi$. We also denote the Fredholm kernel on the right hand side:
\begin{equation} \label{eqn:beta}
\beta(x,x_\ast,v_\ast;\phi):= \int_{\Sp^1}\mathcal{L}[f_0](x,v;\phi) g(x,v; \delta_{x_\ast,v_\ast})\rd{v}\,,
\end{equation}
as a function of $x,x_\ast,v_\ast$ implicitly depend on $\phi$. Then the equation rewrites:
\begin{equation}\label{eqn:LS}
 \int_{\Omega} \beta(x,x_\ast,v_\ast;\phi)\tilde{\sigma}(x)\rd{x}=b(x_\ast,v_\ast;\phi)\,.
\end{equation}
This formulation shows that to recover $\tilde{\sigma}$ amounts to invert the first type Fredholm integral. Note that this equation holds true for every $(x_\ast,v_\ast)\in\Gamma_+$.

The equal sign rarely holds true in reality due to the data pollution. Numerically each experiment prepares one specific incoming and outgoing pair $(\phi^{(k)},\psi^{(k)})$, which uniquely defines $b^{(k)}$ and $\beta^{(k)}$ according to~\eqref{eqn:b} and~\eqref{eqn:beta}. We then seek for $\sigma$ that minimizes the following cost:
\begin{equation}\label{eqn:opt_linear}
\min_{\sigma} \frac{1}{N}\sum_k \calJ_{(k)} = \frac{1}{N}\sum_k\left(\frac{1}{2}\|\int_\Omega \beta^{(k)}\sigma\rd{x}-b^{(k)}\|^2_++\frac{\alpha}{2}\|\sigma\|_2^2\right)\,.
\end{equation} 
where we abuse the notation $\sigma$ to denote $\tilde{\sigma}$. The first term in $\calJ$ is the mismatch in~\eqref{eqn:LS} and the second term is the regularizer with a hyper-parameter $\alpha$. Both terms are measured in $L_2$.
%
In a compact form, it writes as:
\begin{equation*}
\min_\sigma \frac{1}{N}\sum_k\left(\frac{1}{2}\|\opA_0(\sigma)[\phi^{(k)}] - \psi^{(k)}\|^2_+ +\frac{\alpha}{2}\|\sigma\|^2_2\right)\,,
\end{equation*}
where $\mathcal{A}_0$ is the linearized albedo operator that maps the incoming flow $\phi$ supported on $\Gamma_-$ to an outgoing flow measured at $(x_\ast,v_\ast)\in\Gamma_+$.
\begin{equation*}
\opA_0(\sigma)[\phi] = \int_\Omega \beta(x,x_\ast,v_\ast;\phi)\sigma(x)\rd{x}\,.
\end{equation*}

On this formulation, the application of SGD is straightforward:
\begin{equation}\label{eqn:update_linear}
\sigma_{n+1}(x) = \sigma_n(x) - \eta_n\left( \int_{\Gamma_+} \beta^{(\gamma_n)}(x,x_\ast,v_\ast)\left( \int_\Omega\beta^{(\gamma_n)}(\tilde{x},x_\ast,v_\ast)\sigma_n(\tilde{x})\rd{\tilde{x}}-b^{(\gamma_n)}(x_\ast,v_\ast)\right) \rd{x_\ast}\rd{v_\ast}+\alpha\sigma_n(x)\right)
\end{equation}
with $\gamma_n$ randomly selected from $\{1\,,\cdots,N\}$ at every step. We summarize the algorithm:

\RestyleAlgo{boxruled}
\begin{algorithm}
\SetAlgoLined
\KwData{$N$ experiments with \begin{itemize}\item[1.] incoming data $\phi^{(k)}$ for $\{k=1\,,\cdots,N\}$;\item[2.] outgoing measurements $\psi^{(k)}$ for $\{k=1\,,\cdots,N\}$;\item[3.] error tolerance $\varepsilon$;\item[4.] initial guess $\sigma_0$.\end{itemize}}
\KwResult{The minimizer $\sigma$ to the optimization problem~\eqref{eqn:opt_linear} that is within $\varepsilon$ accuracy.}
Step I: compute the dual problem~\eqref{eqn:dual_linear} using $\delta_{x_\ast,v_\ast}$ for all $(x_\ast,v_\ast)\in\Gamma^\text{d}_+$;\\
\While{$\|\frac{\rd}{\rd\sigma}\calJ_{\gamma_{n}}(\sigma_n)\|>\varepsilon$}{
Step II: randomly pick $\gamma_n\in\{1\,,\cdots,N\}$;\\
Step III: compute the background problem~\eqref{eqn:f0} using $\phi^{(\gamma_n)}$ for $f_0^{(\gamma_n)}$;\\
Step IV: compute $\beta^{(\gamma_n)}$ by~\eqref{eqn:beta};\\
Step V: compute $b^{(\gamma_n)}$ using~\eqref{eqn:b} with $\psi^{(\gamma_n)}$ and $f_0^{(\gamma_n)}$;\\
Step VI: update using~\eqref{eqn:update_linear}.\\
$n=n+1$.
}
\caption{SGD applied on the minimization problem~\eqref{eqn:opt_linear}.}\label{alg:linear}
\end{algorithm}

\subsubsection{Discretization}
We briefly describe the discrete version of~\eqref{eqn:opt_linear}. This is to replace the integration by its numerical version, and $\sigma$ and $b^{(k)}$ are replaced by their discrete counterparts as well. To be precise,
\begin{equation*}
\int_\Omega \beta^{(k)}\sigma\rd{x}\quad\rightarrow\quad \Amat^{(k)}\sigma\,,
\end{equation*}
where $\Amat^{(k)}$ is a matrix of size $n_+\times n_x$, where $n_+$ is the number of coordinates in $\Gamma_+$ and $n_x$ is the number of coordinates in $\Omega$. Its entries are defined by:
\begin{equation*}
\Amat^{(k)}_{mn} = \beta^{(k)}(x_n,x_{\ast,m},v_{\ast,m})\Delta x_n\,,
\end{equation*}
with $\Delta x_n$ being the volume grid point $x_n$ represents. For evenly distributed grids in $2D$, $\Delta x_n = \Delta x^2$ where $\Delta x$ is the mesh size. In this way:
\begin{equation*}
\left(\Amat^{(k)}\sigma\right)_m = \sum_n\beta^{(k)}(x_n,x_{\ast,m},v_{\ast,m})\sigma(x_n)\Delta x_n\,,
\end{equation*}
numerically approximates $\int \beta^{(k)}\sigma\rd{x}$ evaluated at $(x_{\ast,m},v_{\ast,m})$, the $m$-th pair on $\Gamma_+$. Notations $\sigma$ and $b^{(k)}$ are abused to denote both continuous and discrete versions.

Now the objective function becomes:
\begin{equation}
\calJ_{k}(\sigma) =\frac{1}{2} \| \Amat^{(k)}\sigma -b^{(k)} \|_2^2 + \frac{\alpha}{2} \| \sigma\|_2^2\,.
\end{equation}
Typically when rewritten in this way, $\alpha$ needs to be adjusted to incorporate the constant in the numerical integration, but we abuse the notation and still use $\alpha$.

Numerically to update in each step, one needs to take gradient of $\calJ_{k}$ with respect to $\sigma$. Given the simple form we are studying here, it is simply, denoted by $G^{(k)}$:
\begin{equation*}
G^{(k)}(\sigma) = \nabla_\sigma\calJ_{k} = \Amat^{(k)\top}\Amat^{(k)}\sigma - \Amat^{(k)\top}b^{(k)} + \alpha\sigma\,.
\end{equation*}
Denote
\begin{equation}\label{eqn:mean_k}
\mu_k :=  {\Amat^{(k)}}^\top \Amat^{(k)}\,,\quad \text{and}\quad \nu_k :=-\Amat^{(k)\top}b^{(k)}\,,
\end{equation}
then it has a simpler form:
\begin{equation*}
G^{(k)}(\sigma) = (\mu_k+\alpha)\sigma + \nu_k\,.
\end{equation*}
Note $\alpha$ is a number and $\mu_k$ is a matrix of size $n_x\times n_x$ and $\nu_k$ is a vector of $n_x$ length. We also immediately have:
\begin{equation}\label{eqn:mean_derivative}
G(\sigma) = \nabla_\sigma\calJ = \frac{1}{N}\sum_{k=1}^N\nabla_\sigma\calJ_{k} = \frac{1}{N}\sum_{k=1}^NG^{(k)}\,.
\end{equation}
Define
\begin{equation}\label{eqn:mean_A}
\mu_\Amat := \mathbb{E}[{\Amat^{(\gamma_k)}}^\top \Amat^{(\gamma_k)}] =\frac{1}{N}\sum_{k=1}^N {\Amat^{(k)}}^\top \Amat^{(k)}\,,\quad \text{and}\quad \nu_\Amat :=-\frac{1}{N}\sum_{k=1}^N\Amat^{(k)\top}b^{(k)}\,,
\end{equation}
then~\eqref{eqn:mean_derivative} has a simpler form:
\begin{equation}\label{eqn:mean_derivative2}
G(\sigma)  = \left(\mu_\Amat + \alpha\right)\sigma + \nu_\Amat\,.
\end{equation}

To update from $n$ to $n+1$ step, one randomly pick $\gamma_n$ and update $\sigma_n$ using the gradient information of $\nabla_\sigma\calJ_{\gamma_n}$:
\begin{align}\label{eqn:linear_sigma_update}
\sigma_{n+1} &= \sigma_{n} - \eta G^{(\gamma_n)}(\sigma_n)= \sigma_{n} - \eta \left( (\mu_{\gamma_n}+\alpha)\sigma_n+\nu_{\gamma_n}\right)\,.
\end{align}

\section{Error Analysis}
In this section we analyze the convergence of SGD on the linearized problem~\eqref{eqn:opt_linear}. Recall the minimization:
\begin{equation}\label{eqn:linear_opt_conv}
\min_{\sigma} \calJ = \min_{\sigma} \frac{1}{N}\sum_{k=1}^N \calJ^{(k)} = \frac{1}{N}\sum_k\left(\frac{1}{2}\|\int_\Omega \beta^{(k)}\sigma\rd{x}-b^{(k)}\|^2_++\frac{\alpha}{2}\|\sigma\|_2^2\right)\,,
\end{equation}
where $\int_\Omega \beta^{(k)}\sigma\rd{x}$, upon integrating over $x\in\Omega$ provides a function supported on $(x_\ast,v_\ast)\in\Gamma_+$, and the update formula~\eqref{eqn:linear_sigma_update}.
Denote $\sigma^\ast$ the true solution to the minimization problem, meaning $G(\sigma^\ast) = 0$, and subtract it from the equation~\eqref{eqn:linear_sigma_update}, we get the updating formula for the error. Denote $e_n = \sigma_n - \sigma^\ast$, the error at $n$-th step, then:
\begin{eqnarray}
e_{n+1} &=& e_n -  \eta G^{(\gamma_n)}(\sigma_n)\nonumber\\
&=& e_n - \eta \left(G(\sigma_n)  -  G(\sigma^\ast)\right)+ \eta \left(G(\sigma_n) - G^{(\gamma_n)}(\sigma_n)\right)\nonumber\\
& =& {e_n -\eta\left(\mu_\Amat+\alpha\right)e_n}+ \eta \left(G(\sigma_n) - G^{(\gamma_n)}(\sigma_n)\right)\label{eqn:error_iteration1}\\
& =& \underbrace{e_n -\eta\left(\mu_\Amat+\alpha\right)e_n}_{\text{decay}}+ \underbrace{\eta \left( (\mu_\Amat-\mu_{\gamma_n})\sigma_n + \nu_\Amat - \nu_{\gamma_n}\right)}_{\text{fluctuation}}\label{eqn:error_iteration2}\,.
\end{eqnarray}
From the first to the second line, we used the fact that $G(\sigma^\ast) = 0$, and from the second to the third line, we use the fact that $G$ is linear on $\sigma$ as seen in~\eqref{eqn:mean_derivative2}, and definitions in~\eqref{eqn:mean_k} and~\eqref{eqn:mean_A}.

We further denote
\begin{equation}\label{eqn:def_d}
B =  \mathbb{I}-\eta \mu_\Amat -\eta \alpha\,,\quad\text{and}\quad d_n = \eta\left[(\mu_\Amat-\mu_{\gamma_n})\sigma_n + \nu_\Amat - \nu_{\gamma_n}\right]\,,
\end{equation}
then the update formula becomes:
\begin{equation}\label{eqn:update2}
e_{n+1} = Be_n + d_n\,.
\end{equation}

According to this formula, we immediately see that the decay of $e_n$ is controlled by two pieces: the first term provides the iterative decay while the second term gives fluctuation that represents the randomness from sampling $\gamma_n$. The key of error analysis is to:
\begin{itemize}
\item[1.] find appropriate $\eta$ so that $B = \mathbb{I} - \eta(\mu_\Amat+\alpha)$ has smaller than $1$ spectrum, leading to convergence;
\item[2.] show the fluctuation term has mean zero, and thus it is not producing extra error on average;
\item[3.] show the fluctuation term has very small variance, and thus the chance of producing extra error is small.
\end{itemize}
The first argument is relatively straightforward, and the latter two amount to analyze the behavior of $d_n$. We first summarize it in Lemma~\ref{lemma:d} and collect error analysis on the mean and the variance in Theorem~\ref{thm:error_mean} and Theorem~\ref{thm:error_variance} respectively.

%
%

\begin{lemma}\label{lemma:d}
Assume
\begin{equation*}
\mathbb{E}\left(\|\mu_\Amat-\mu_{\gamma_n}\|^2_2\right) < C_\mu\,,\text{and}\quad\mathbb{E}\left(\|\nu_\Amat-\nu_{\gamma_n}\|^2_2\right)<C_\nu\,,\quad\forall n\,.
\end{equation*}
Using the definition in~\eqref{eqn:def_d} we have:
\begin{itemize}
\item[1.] $\mathbb{E}(d_n) = 0$ for all $n$;
\item[2.] $\text{Cov}[d_i,d_j] = 0$ for all $1\leq i< j\leq n$;
\item[3.] $\text{Cov}[d_n,d_n] \leq C \eta^2 (\mathbb{E}\left(\| \sigma_n \|_2^2\right) + 1)$\,.
\end{itemize}
\end{lemma}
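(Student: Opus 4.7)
The plan is to set up the filtration $\mathcal{F}_n := \sigma(\gamma_0, \gamma_1, \ldots, \gamma_{n-1})$ generated by the past sampling history and handle all three claims by conditioning. The key observation is that $\sigma_n$ is $\mathcal{F}_n$-measurable, since by induction from the update formula~\eqref{eqn:linear_sigma_update} it is a deterministic function of $\sigma_0, \gamma_0, \ldots, \gamma_{n-1}$. Meanwhile $\gamma_n$ is, by hypothesis, drawn uniformly from $\{1,\ldots,N\}$ independently of $\mathcal{F}_n$, so $\sigma_n$ and $\gamma_n$ are independent. Because $\gamma_n$ is uniform on the sample pool, definitions~\eqref{eqn:mean_A} give $\mathbb{E}[\mu_{\gamma_n}] = \mu_\Amat$ and $\mathbb{E}[\nu_{\gamma_n}] = \nu_\Amat$, and by independence these identities pass to conditional expectations given $\mathcal{F}_n$.

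For claim (1), I apply the tower property $\mathbb{E}[d_n] = \mathbb{E}\bigl[\mathbb{E}[d_n \mid \mathcal{F}_n]\bigr]$. Pulling the $\mathcal{F}_n$-measurable $\sigma_n$ outside the inner conditional expectation, I get $\mathbb{E}[d_n \mid \mathcal{F}_n] = \eta[(\mu_\Amat - \mathbb{E}[\mu_{\gamma_n}\mid \mathcal{F}_n])\sigma_n + \nu_\Amat - \mathbb{E}[\nu_{\gamma_n}\mid \mathcal{F}_n]] = 0$. Claim (2) follows by the same device one level up: for $i<j$, condition on $\mathcal{F}_j$. Since $d_i$ is $\mathcal{F}_i$-measurable, hence $\mathcal{F}_j$-measurable, one has $\mathbb{E}[d_i d_j^\top \mid \mathcal{F}_j] = d_i\,\mathbb{E}[d_j \mid \mathcal{F}_j]^\top = 0$ by the computation just performed (with $n$ replaced by $j$), and combining with $\mathbb{E}[d_i] = \mathbb{E}[d_j] = 0$ from (1) yields $\mathrm{Cov}[d_i,d_j] = 0$.

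For claim (3), since $\mathbb{E}[d_n]=0$, the quantity reduces to (the trace of) $\mathbb{E}[d_n d_n^\top]$, i.e.\ $\mathbb{E}[\|d_n\|_2^2]$. I apply $(a+b)^2 \leq 2a^2 + 2b^2$ to~\eqref{eqn:def_d}, followed by the submultiplicative bound $\|(\mu_\Amat-\mu_{\gamma_n})\sigma_n\|_2 \leq \|\mu_\Amat-\mu_{\gamma_n}\|_2 \|\sigma_n\|_2$, to get
\begin{equation*}
\|d_n\|_2^2 \leq 2\eta^2 \bigl( \|\mu_\Amat-\mu_{\gamma_n}\|_2^2\, \|\sigma_n\|_2^2 + \|\nu_\Amat-\nu_{\gamma_n}\|_2^2 \bigr).
\end{equation*}
Taking expectations, splitting the first summand via the independence of $\gamma_n$ and $\sigma_n$, and plugging in the hypothesized constants $C_\mu, C_\nu$ gives the desired bound with $C = 2\max(C_\mu, C_\nu)$.

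No step here is genuinely difficult; the proof is essentially a careful bookkeeping of what is independent of what. The one place to be attentive is the measurability/independence argument justifying that $\sigma_n$ behaves as a constant when integrating out $\gamma_n$; once that is established, all three claims fall out from the uniform-sampling identity $\mathbb{E}[\mu_{\gamma_n}] = \mu_\Amat$ (and its $\nu$-analogue) plus a Cauchy--Schwarz-style estimate for (3).
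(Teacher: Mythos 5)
Your proposal is correct and follows essentially the same route as the paper's proof: independence of $\sigma_n$ from $\gamma_n$ plus the uniform-sampling identities $\mathbb{E}[\mu_{\gamma_n}]=\mu_\Amat$, $\mathbb{E}[\nu_{\gamma_n}]=\nu_\Amat$ give claims (1) and (2), and a Cauchy--Schwarz-type estimate gives (3) with the same constant $C=2\max\{C_\mu,C_\nu\}$. Your explicit filtration/tower-property argument is in fact a cleaner rendering of the paper's somewhat informal step $\mathbb{E}(d_i d_j^\top)=\mathbb{E}(d_i\mathbb{E}(d_j^\top))$, and your trace bound for (3) dominates the paper's quadratic-form bound since $\Cov[d_n,d_n]$ is positive semidefinite.
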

\begin{proof}
\begin{itemize}
\item[1.]
According to the definition:
\begin{equation*}
\frac{1}{\eta}\mathbb{E}(d_n) = \mathbb{E}((\mu_\Amat-\mu_{\gamma_n})\sigma_n) + \mathbb{E}(\nu_\Amat-\nu_{\gamma_n})\,.
\end{equation*}
The second term is zero due to equation~\eqref{eqn:mean_k} and~\eqref{eqn:mean_A}. To study the first term we first realize that the randomness comes from both $\gamma_n$ and $\sigma_n$. Due to~\eqref{eqn:update}, $\sigma_n$ only depends on $\{\gamma_1,\ldots,\gamma_{n-1}\}$, and thus it is independent of $\gamma_n$. Therefore:
\begin{equation*}
\mathbb{E}((\mu_\Amat-\mu_{\gamma_n})\sigma_n)=\mathbb{E}(\mu_\Amat-\mu_{\gamma_n})\mathbb{E}(\sigma_n)\,.
\end{equation*}
Given~\eqref{eqn:mean_k} and~\eqref{eqn:mean_A}, we see $\mathbb{E}((\mu_\Amat-\mu_{\gamma_n})\sigma_n)=0$ and thus $d_n$ is mean zero.
\item[2.] Since $d_i$ is mean zero:
\begin{align*}
\Cov[d_i,d_j] = \mathbb{E}(d_i d_j^\top) = \mathbb{E}\left(d_i\mathbb{E}(d_j^\top)\right)=0\,.
\end{align*}
The first equation comes from $d_i$ and $d_j$ being mean zero. The second equation holds true because $i<j$.
\item[3.] For the third covariance:
\begin{align*}
\frac{1}{\eta^2}\Cov[d_n,d_n] =& \frac{1}{\eta^2}\mathbb{E}(d_n d_n^\top) \\
=& \mathbb{E}\left((\mu_\Amat-\mu_{\gamma_n})\sigma_n\sigma_n^\top(\mu_\Amat-\mu_{\gamma_n})^\top\right) + \mathbb{E}\left((\nu_\Amat-\nu_{\gamma_n})(\nu_\Amat-\nu_{\gamma_n})^\top\right)\\
&+ \mathbb{E}\left((\mu_\Amat-\mu_{\gamma_n})\sigma_n(\nu_\Amat-\nu_{\gamma_n})^\top\right)+\mathbb{E}\left((\nu_\Amat-\nu_{\gamma_n}) \sigma_n^\top (\mu_\Amat-\mu_{\gamma_n})^\top\right) \,.
\end{align*}
Take arbitrary $x\in \mathbb{R}^{N_x}$ with $\| x \|_2 = 1$ and multiply on both sides, we have
\begin{align*}
\frac{1}{\eta^2}x^\top\Cov[d_n,d_n]x = &  x^\top \mathbb{E}\left((\mu_\Amat-\mu_{\gamma_n})\sigma_n\sigma_n^\top(\mu_\Amat-\mu_{\gamma_n})^\top\right) x + x^\top \mathbb{E}\left((\nu_\Amat-\nu_{\gamma_n})(\nu_\Amat-\nu_{\gamma_n})^\top\right) x \\
&+2 x^\top \mathbb{E}\left((\mu_\Amat-\mu_{\gamma_n})\sigma_n(\nu_\Amat-\nu_{\gamma_n})^\top\right) x\\
\leq & 2C_\mu \mathbb{E} \left( \| \sigma_n \|_2^2 \right) + 2C_\nu \,.
\end{align*}
To obtain the inequality we used the fact that
\begin{align*}
x^\top \mathbb{E}\left((\mu_\Amat-\mu_{\gamma_n})\sigma_n\sigma_n^\top(\mu_\Amat-\mu_{\gamma_n})^\top\right) x  =& x^\top \mathbb{E}\left((\mu_\Amat-\mu_{\gamma_n})\mathbb{E}\left(\sigma_n\sigma_n^\top \right) (\mu_\Amat-\mu_{\gamma_n})^\top\right) x   \\
\leq & C_\mu \mathbb{E}\left(\|\sigma_n \|_2^2\right)
\end{align*}
and that
\begin{align*}
2 x^\top \mathbb{E}\left((\mu_\Amat-\mu_{\gamma_n})\sigma_n(\nu_\Amat-\nu_{\gamma_n})^\top\right) x =& 2 \mathbb{E}\left(x^\top(\mu_\Amat-\mu_{\gamma_n})\mathbb{E}(\sigma_n)(\nu_\Amat-\nu_{\gamma_n})^\top x\right)\\
\leq & \mathbb{E}\left[\left(x^\top(\mu_\Amat-\mu_{\gamma_n})\mathbb{E}(\sigma_n)\right)^2 + \left((\nu_\Amat-\nu_{\gamma_n})^\top x\right)^2\right]\\
\leq & C_\mu \mathbb{E} \left( \| \sigma_n \|_2^2 \right) + C_\nu \,.
\end{align*}
We achieve the conclusion by multiplying $\eta^2$ on both sides and choose $C = 2\max\{C_\mu\,,C_\nu\}$.
\end{itemize}
\end{proof}

With this lemma we study the mean and the variance of the error in the following two theorems.
\begin{theorem}\label{thm:error_mean}
Denote $\sigma^\ast$ the minimizer of problem \eqref{eqn:linear_opt_conv} and the expected value of error:
\begin{equation*}
u_n = \mathbb{E}(e_n) = \mathbb{E}(\sigma_n-\sigma^\ast)\,.
\end{equation*}
Assume that $\mu_\Amat$ (defined in~\eqref{eqn:mean_A} has a bounded spectrum, meaning there exists $C_A$ such that:
\begin{equation}
\|\mu_\Amat\|_2\leq C_A\,,
\end{equation}
then for $0<\eta<\frac{2}{C_A+\alpha}$, the expected value of error decays to zero exponentially fast:
\begin{equation}\label{eqn:u_n}
\| u_n \|_2 \leq \lambda^n \| u_0 \|_2 \,,
\end{equation}
where $|\lambda | <1$ will be defined in \eqref{eqn:lambda}.
\end{theorem}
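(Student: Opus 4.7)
The plan is to take the expectation of the vector recursion \eqref{eqn:update2} and exploit the fact, established in Lemma~\ref{lemma:d}(1), that $\mathbb{E}(d_n) = 0$. Since $B = \mathbb{I} - \eta\mu_\Amat - \eta\alpha$ is deterministic, this immediately gives the linear recursion $u_{n+1} = B u_n$, so by induction $u_n = B^n u_0$ and hence $\|u_n\|_2 \leq \|B\|_2^{\,n}\,\|u_0\|_2$. The task therefore reduces to showing $\|B\|_2 =: \lambda < 1$ under the stated step-size condition.

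For the spectral bound, I would first observe that $\mu_\Amat = \frac{1}{N}\sum_k \Amat^{(k)\top}\Amat^{(k)}$ is symmetric and positive semi-definite, so its spectrum lies in $[0, C_A]$ by the hypothesis $\|\mu_\Amat\|_2 \leq C_A$. Consequently, $\mu_\Amat + \alpha\mathbb{I}$ is symmetric with spectrum in $[\alpha, C_A + \alpha]$, and $B = \mathbb{I} - \eta(\mu_\Amat + \alpha\mathbb{I})$ is symmetric with spectrum in $[1 - \eta(C_A+\alpha),\, 1 - \eta\alpha]$. Thus
\begin{equation}\label{eqn:lambda}
\lambda := \|B\|_2 = \max\bigl\{\,|1 - \eta\alpha|,\; |1 - \eta(C_A+\alpha)|\,\bigr\}.
\end{equation}
Assuming $\alpha \geq 0$ and $0 < \eta < \frac{2}{C_A+\alpha}$, both quantities inside the maximum are strictly less than $1$: the first because $\eta\alpha < \frac{2\alpha}{C_A+\alpha} \leq 2$ (with $\alpha > 0$ to avoid the trivial case), and the second directly from the step-size bound. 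Therefore $\lambda < 1$.

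Combining the two pieces gives $\|u_n\|_2 \leq \lambda^n \|u_0\|_2$ as claimed. The argument is essentially routine once the mean-zero property of $d_n$ from Lemma~\ref{lemma:d} is invoked; the only subtlety is recognizing that $\mu_\Amat$ is symmetric PSD so that $\|B\|_2$ equals its largest-in-magnitude eigenvalue and the spectral bound translates directly into an operator-norm bound. There is no real obstacle here, since the fluctuation term $d_n$ does not enter the expectation at all; the more delicate analysis of the variance, where $d_n$ plays a nontrivial role, is deferred to Theorem~\ref{thm:error_variance}.
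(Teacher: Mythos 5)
Your proposal is correct and follows essentially the same route as the paper: take expectations in the error recursion, use Lemma~\ref{lemma:d}(1) to eliminate $d_n$, and bound $\|B\|_2$ by $\lambda<1$ under the step-size restriction. Your spectral argument (using that $\mu_\Amat$ is symmetric positive semi-definite so that the spectrum of $B$ lies in $[1-\eta(C_A+\alpha),\,1-\eta\alpha]$) actually supplies the justification for $\lambda<1$ that the paper states without detail, modulo the harmless caveat that strictness at the upper end requires $\alpha>0$.
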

\begin{proof}
We start from the iteration formula for $e_n$ in~\eqref{eqn:update}. Take expectation on both sides:
\begin{equation}\label{eqn:SGD_iter3}
u_{n+1} = u_n - \eta(\mu_\Amat+\alpha)u_n +\eta \mathbb{E}(d_n)\,.
\end{equation}
Since $d_n$ is mean zero according to the previous lemma,~\eqref{eqn:SGD_iter3} becomes:
\begin{equation*}
u_{n+1} = (\mathbb{I} - \eta\mu_\Amat - \eta\alpha)u_n\,.
\end{equation*}
With $0<\eta<\frac{2}{C_A+\alpha}$ and define
\begin{equation}\label{eqn:lambda}
\lambda := \|\mathbb{I} - \eta \mu_\Amat- \eta {\alpha}\|_2\,,
\end{equation}
$\lambda$ is guaranteed to be controlled by $1$ and we achieve the conclusion.
\end{proof}

\begin{theorem}\label{thm:error_variance}
With small learning rate $\eta$, the error of SGD algorithm has bounded covariance:
\[
\Cov[e_n,e_n] \lesssim \eta\,,\quad\forall n\,.
\]
\end{theorem}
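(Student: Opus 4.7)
The plan is to iterate the affine recursion $e_{n+1}=Be_n+d_n$ from~\eqref{eqn:update2} into a closed-form expression and then extract the covariance using the structural properties of $d_k$ established in Lemma~\ref{lemma:d}. Unrolling the recursion I would write
\begin{equation*}
e_n = B^n e_0 + \sum_{k=0}^{n-1} B^{n-1-k} d_k\,.
\end{equation*}
Since Lemma~\ref{lemma:d} part 1 gives $\mathbb{E}(d_k)=0$ and part 2 gives $\Cov[d_i,d_j]=0$ for $i\neq j$, all cross terms drop out of the covariance and
\begin{equation*}
\Cov[e_n,e_n] = B^n \Cov[e_0,e_0] (B^\top)^n + \sum_{k=0}^{n-1} B^{n-1-k}\Cov[d_k,d_k](B^\top)^{n-1-k}\,.
\end{equation*}

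The next step is to convert the operator-norm bound $\|B\|_2 \le \lambda<1$ from Theorem~\ref{thm:error_mean} into a summable geometric series. Because $\mu_\Amat\succeq 0$, the eigenvalues of $B$ lie in $[1-\eta(C_A+\alpha),\,1-\eta\alpha]$, so for $\eta$ sufficiently small $\lambda^2 \le 1-2\eta\alpha + \eta^2\alpha^2$, which gives $(1-\lambda^2)^{-1} \lesssim (\eta\alpha)^{-1}$. Combined with Lemma~\ref{lemma:d} part 3, which controls each $\Cov[d_k,d_k]$ by $C\eta^2(\mathbb{E}\|\sigma_k\|_2^2+1)$, a termwise norm bound yields
\begin{equation*}
\|\Cov[e_n,e_n]\|_2 \lesssim \lambda^{2n}\|\Cov[e_0,e_0]\|_2 + C\eta^2\cdot\frac{1}{1-\lambda^2}\cdot\max_{0\le k\le n-1}\bigl(\mathbb{E}\|\sigma_k\|_2^2 + 1\bigr)\,,
\end{equation*}
so that if $\max_k \mathbb{E}\|\sigma_k\|_2^2$ can be shown uniformly bounded, the right-hand side is $O(\eta)$ as claimed.

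The main obstacle — and the place where care is required — is precisely this uniform bound on $\mathbb{E}\|\sigma_k\|_2^2$: the bound in Lemma~\ref{lemma:d} part 3 contains $\mathbb{E}\|\sigma_k\|_2^2$, which via $\sigma_k=\sigma^\ast+e_k$ and $\mathbb{E}\|e_k\|_2^2 = \|u_k\|_2^2 + \mathrm{tr}(\Cov[e_k,e_k])$ depends back on the covariance we are trying to bound. I would close this loop by induction on $n$: assume $\mathrm{tr}(\Cov[e_k,e_k])\le K\eta$ for all $k\le n$, use Theorem~\ref{thm:error_mean} to bound $\|u_k\|_2\le \lambda^k\|u_0\|_2$, and conclude $\mathbb{E}\|\sigma_k\|_2^2\le 2\|\sigma^\ast\|_2^2 + 2\|u_0\|_2^2 + 2K\eta$, which is uniformly bounded by some $M+2K\eta$. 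Feeding this back into the geometric estimate above produces $\mathrm{tr}(\Cov[e_{n+1},e_{n+1}]) \le \lambda^{2(n+1)}\mathrm{tr}(\Cov[e_0,e_0]) + C(M+2K\eta)\eta/(2\alpha)$, and for $\eta$ small the inductive constant $K$ can be chosen so large that the right-hand side is $\le K\eta$, closing the induction.

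A clean alternative, if one prefers to avoid induction, is to recognize $\Cov[e_n,e_n]$ as satisfying the discrete Lyapunov-type recursion $P_{n+1}=BP_nB^\top + \Cov[d_n,d_n]$; its fixed-point analysis gives the same $O(\eta)$ stationary bound, and the uniform bound on $\mathbb{E}\|\sigma_k\|_2^2$ again follows from the contractivity of $B$. Either route delivers $\Cov[e_n,e_n]\lesssim \eta$ uniformly in $n$, completing the proof.
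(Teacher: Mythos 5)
Your proposal is correct and follows essentially the same route as the paper: unroll $e_{n+1}=Be_n+d_n$, use Lemma~\ref{lemma:d} to kill the cross terms and bound each $\Cov[d_k,d_k]$ by $C\eta^2(\mathbb{E}\|\sigma_k\|_2^2+1)$, sum the geometric series with $(1-\lambda^2)^{-1}\lesssim\eta^{-1}$, and close the circular dependence on $\mathbb{E}\|\sigma_k\|_2^2$ by induction. The only differences are cosmetic: the paper phrases the induction as a uniform bound $\mathbb{E}\|e_k\|_2^2\le M$ rather than $\mathrm{tr}(\Cov[e_k,e_k])\le K\eta$, and drops the $B^n\Cov[e_0,e_0](B^\top)^n$ term since $e_0$ is deterministic.
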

\begin{proof}
We once again use:
\begin{equation*}
e_{n+1} = Be_n + d_n\,,
\end{equation*}
with
\begin{equation*}
B =  \mathbb{I}-\eta \mu_\Amat -\eta {\alpha}\,,\quad\text{and}\quad d_n = (\mu_\Amat-\mu_{\gamma_n})\sigma_n + \nu_\Amat - \nu_{\gamma_n}\,.
\end{equation*}
By induction,
\begin{equation*}
e_n = B^n e_0 + \sum_{j=1}^{n-1} B^{n-j} d_{j}  \,.
\end{equation*}
Take covariance of both sides and recall $\Cov[d_i,d_j] = 0$ for all $i\neq j$:
\begin{equation*}
\text{Cov}[e_n,e_n] = \sum_{i,j} \text{Cov}[B^{n-i} d_i\,,B^{n-j} d_{j}] = \sum_i B^{n-i} \Cov[d_{i}\,,d_{i}] (B^{n-i})^\top   \,.
\end{equation*}
Take arbitrary $x\in \mathbb{R}^{N \times 1}$ with $\|x\|_2 = 1$ and multiply on both sides, we have
\begin{equation*}
x^\top \text{Cov}[e_n,e_n] x = \sum_i (x^\top B^{n-i}) \text{Cov}[d_{i}\,,d_{i}] (x^\top B^{n-i})^\top \leq \sum_i C\eta^2 \lambda^{2(n-i)} (\mathbb{E}[\|\sigma_{i}\|_2^2]+1) \,.
\end{equation*}
where the inequality incorporates the previous lemma. Further notice that $\mathbb{E}[\| \sigma_{i} \|_2^2] \leq \mathbb{E}[\| e_{i} \|_2^2] + \| \sigma^\ast \|_2^2$, we absorb the constant:
\begin{equation}\label{eqn:variance_est}
x^\top \text{Cov}[e_n,e_n] x \leq \sum_i \tilde{C}\eta^2 \lambda^{2(n-i)} (\mathbb{E}[\|e_{i}\|_2^2]+1)\,,
\end{equation}
where $\tilde{C} = C+\|\sigma^\ast\|_2$. This inequality only serves as a iterative formula. Upon assuming $\mathbb{E}[\|e_i\|^2_2]$ is uniformly bounded by $M$, then:
\begin{equation}
x^\top \text{Cov}[e_n,e_n] x \leq \tilde{C} \eta^2 (M+1) \frac{1-\lambda^{2n}}{1-\lambda^2} \lesssim \eta\,.
\end{equation}
The last inequality comes from the definition of $\lambda = \|\mathbb{I}-\eta\mu_\Amat-\eta\alpha\|_2=\mathcal{O}(1-\eta)$. Since $x$ is arbitrary, we achieve the conclusion.

To show that there exists a constant $M>0$ such that $\mathbb{E}[\|e_i\|^2_2]$ is truly uniformly bounded by $M$, we use mathematical induction. It is easy to prove the argument is true for $i=0$ by choosing $M=2\max\{1,\mathbb{E}[\|e_0\|_2^2]\} = \max\{2,2\|e_0\|_2^2\}$. Then we assume the argument is true for all $i<n$ and we want to show that $\mathbb{E}[\|e_n\|_2^2]\leq M$. We notice that
\[
\mathbb{E}[\|e_n\|^2_2] = \text{Tr}(\text{Cov}[e_n,e_n]) \leq N \| \text{Cov}[e_n,e_n] \|_2 \,,
\]
then since \eqref{eqn:variance_est} is true for any $x$, we have
\[
 \| \text{Cov}[e_n,e_n] \|_2 \leq  \sum_{i} \tilde{C}\eta^2 \lambda^{2(n-i)} (\mathbb{E}[\|e_{i}\|_2^2]+1) \,.
\]
Combine the above two inequalities and our induction assumption for $i<n$, we derive that
\[
\mathbb{E}[\|e_n\|^2_2] \leq N\left(\tilde{C} \eta^2 (M+1) \frac{1-\lambda^{2(n-1)}}{1-\lambda^2} +\tilde{C}\eta^2 (\mathbb{E}[\| e_n \|_2^2]+1)\right) \,.
\]
For small enough $\eta$, this leads to:
\[
\mathbb{E}[\|e_n\|^2_2] \leq 2N\left(\tilde{C} \eta^2 (M+1) \frac{1-\lambda^{2(n-1)}}{1-\lambda^2} +\tilde{C}\eta^2 \right)\,.
\]
Use the fact $\lambda = \mathcal{O}(1-\eta)$, we can further choose $\eta$ small such that
\[
\mathbb{E}[\|e_n\|^2_2] \leq \frac{1}{2}(M+1) +\frac{1}{2} = \frac{M}{2} + 1 \leq M\,,
\]
which finishes the mathematical induction.

\end{proof}

We finally comment that the two theorems above in fact resonate the analysis in the general setting as stated in Section~\ref{sec:SGD}. There are two main pieces in the error: the iterative decaying term, and the fluctuation term. If the initial guess gives an order 1 error, then the decaying term dominates first, and one simply see the error converging to zero exponentially fast. Once the error becomes as small as the variance (which is at $\eta$ level), the fluctuation term dominates. To force the error converging to zero, numerically one could gradually decrease $\eta$ so that the error fluctuates around zero with smaller and smaller variance. The result will be seen in our numerical results too.

\section{Numerical test}
To illustrate our theoretical results, we present a few numerical test below. The computational space domain is a unit square $\Omega=[0,1]^2$ with mesh size $\rd{x} = 1/20=0.05$, and the velocity domain a unit circle $\Sp$ with mesh size $\rd{\theta} = \frac{2\pi}{40}$. Therefore in the discrete setting:
\begin{equation*}
\Omega^d\times\Sp^d = \{(x_m,\theta_n) = (m_1\rd{x},m_2\rd{x},-\pi+n\rd\theta): \text{with}\quad m_1,m_2 = 0,\cdots,20\,,n = 0,\cdots,40\}\,,
\end{equation*}
and
\begin{equation*}
\Gamma^d_- =\{(x_1 = m_1\rd{x}\,,x_2 = m_2\rd{x}\,,\theta = n\rd\theta)\}
\end{equation*}
with
\begin{equation*}
\begin{aligned}
&  ~\{m_1=0,m_2\in[0,20],n\in[10,30]\}\cup \{m_1=20,m_2\in[0,20],n\in\left([0,10]\cup[30,40]\right)\} \\
				   &\cup \{m_1\in[0,20],m_2=0,n\in[20,40]\} \cup\{m_1\in[0,20],m_2=20,n\in[0,20]\}\,.
\end{aligned}
\end{equation*}

We use GMRES~\cite{LW17} to solve the forward problem \eqref{eqn:rte_2d} with tolerance $10^{-12}$. The scattering coefficient in our experiment is set to be
\begin{equation}
\sigma(x_1,x_2) = \frac{1}{20}\left[1+8\exp\left(-10(x_1-\frac{1}{4})^2-10(x_2-\frac{1}{4})^2\right)+4\exp\left(-10(x_1-\frac{3}{4})^2-10(x_2-\frac{3}{4})^2\right)\right]\,.
\end{equation}
Its evaluation in $\Omega$ ranges from 0.05 to 0.45, as plotted in Figure~\ref{fig:RealSigma}.
\begin{figure}
\centering
\includegraphics[width=0.6\textwidth]{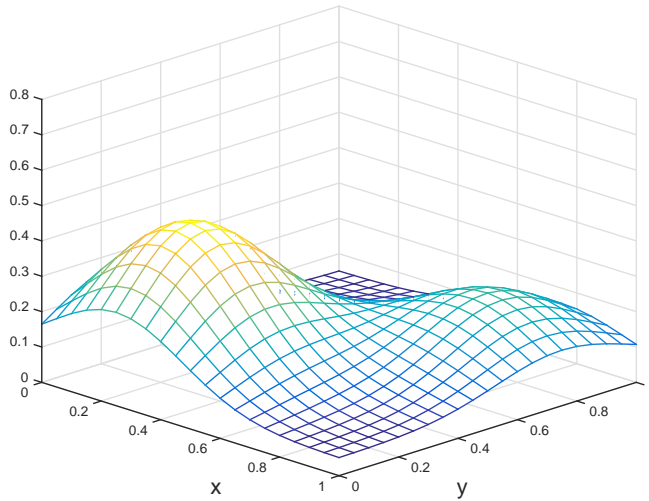}
\caption{Real Scattering Coefficient}\label{fig:RealSigma}
\end{figure}
\subsection{Nonlinear case}
In the nonlinear case \eqref{eqn:opt_nonlinear}, we use 1000 data points $\{(\phi^{(j)},\psi^{(j)}): 1\leq j\leq 1000\}$, where $\phi^{(j)}(x,v)$ is a Dirac delta function centered at a random boundary point and pointing to a random inflow direction. $\psi^{(j)}(x,v)$ is the corresponding measurement on the outflow boundary, i.e.
\begin{equation}
\phi^{(j)}(x,v) = \delta(x-x_{(j)})\delta(v-v_{(j)}), (x_{(j)},v_{(j)})\in \Gamma_- \quad \text{and}\quad \psi^{(j)}(x,v) = (n_x\cdot v) f(x,v;\phi^{(j)})|_{\Gamma_+} \,.
\end{equation}
For our numerical experiments, we set the regularization parameter $\alpha=1$ and learning rate $\eta_n = \frac{\eta_0}{1+\eta_0 \alpha n}$ with $\eta_0 = 0.0044$. Note that the learning rate is a hyperparameter that can be adjusted according to users' preferences. We choose the recommended $\frac{1}{n}$ from~\cite{Bottou2012}. We test our algorithm with two different initial guesses: 1. Initial guess is a constant deviation from the real scattering coefficient $\sigma_0 = \sigma + 0.18$; 2. Initial guess is the product of the scattering coefficient and a random field: $\sigma_0 = \sigma R$, where $R\in \mathbb{R}^{21\times 21}$ has i.i.d. random variable components drew from uniform distribution $U([0.1,3.1])$. In each iteration, two forward problems (one original and one dual) are solved to compute the gradient and we run SGD algorithm for 2000 steps.

We present the numerical solutions in Figure~\ref{fig:Const} and Figure~\ref{fig:Ran} for constant deviation and random deviation as the initial guess respectively. In both, the upper left plot shows the initial guess $\sigma_0$, and the difference compared with the true media is plotted in the upper right. The lower left and lower right plots show the numerical solution after $2000$ iterations and its difference from the true media. We also record the relative error between $\sigma_n$ and $\sigma$ and plot the decay in Figure~\ref{fig:ReEr}. Note that due to the nontrivial regularization term, we cannot expect the solution converging to the true media. As seen in Figure~\ref{fig:ReEr} the error saturates at $0.2$. It does provide very good recovery visually as seen in Figure~\ref{fig:Const} and~\ref{fig:Ran}.

\begin{figure*}
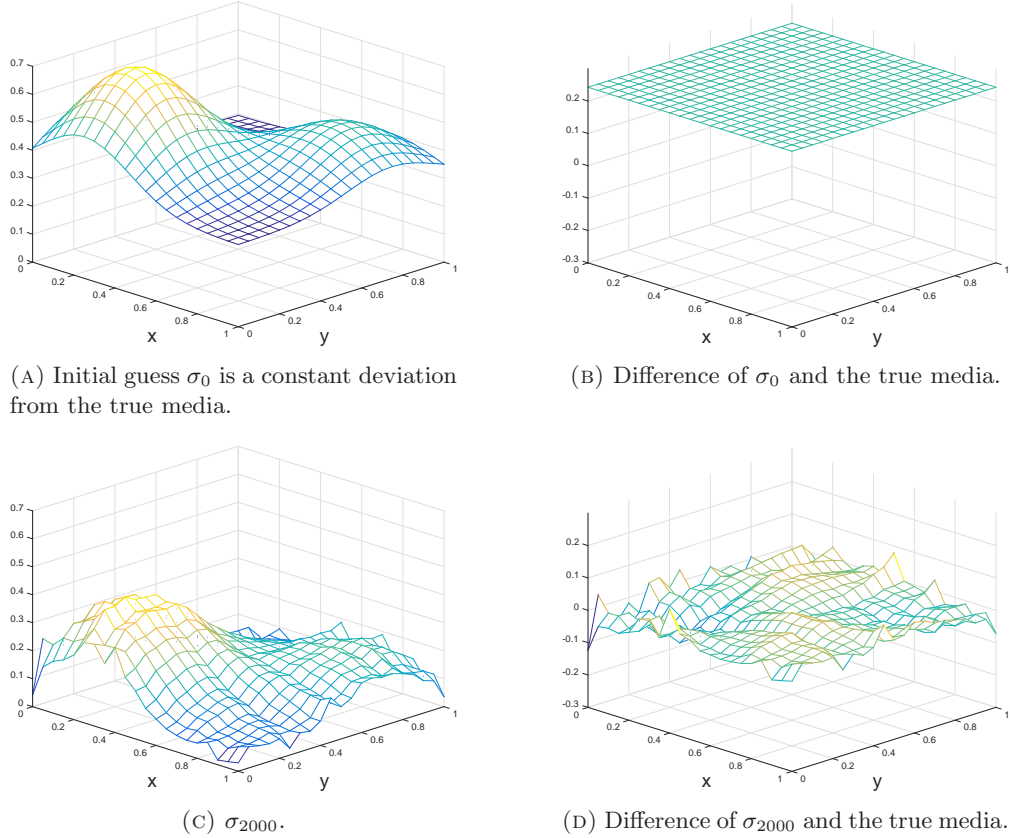

\centering

\begin{subfigure}[t]{0.35\textwidth}
\centering
\includegraphics[width=\textwidth]{Init_Const.eps}
\caption{Initial guess $\sigma_0$ is a constant deviation from the true media.}
\label{fig:Init_Const}
\end{subfigure}%
\hspace{0.5in}
\begin{subfigure}[t]{0.35\textwidth}
\centering
\includegraphics[width=\textwidth]{Dif_Init_Const.eps}
\caption{Difference of $\sigma_0$ and the true media.}
\label{fig:Dif_Init_Const}
\end{subfigure}

\bigskip 

\begin{subfigure}[t]{0.35\textwidth}
\centering
\includegraphics[width=\textwidth]{Final_Const.eps}
\caption{$\sigma_{2000}$.}
\label{fig:Final_Const}
\end{subfigure}%
\hspace{0.5in}
\begin{subfigure}[t]{0.35\textwidth}
\centering
\includegraphics[width=\textwidth]{Dif_Final_Const.eps}
\caption{Difference of $\sigma_{2000}$ and the true media.}
\label{fig:Dif_Final_Const}
\end{subfigure}

\caption{Nonlinear setting with initial guess being a constant deviation from the true media.}
\label{fig:Const}
\end{figure*}

\begin{figure*}
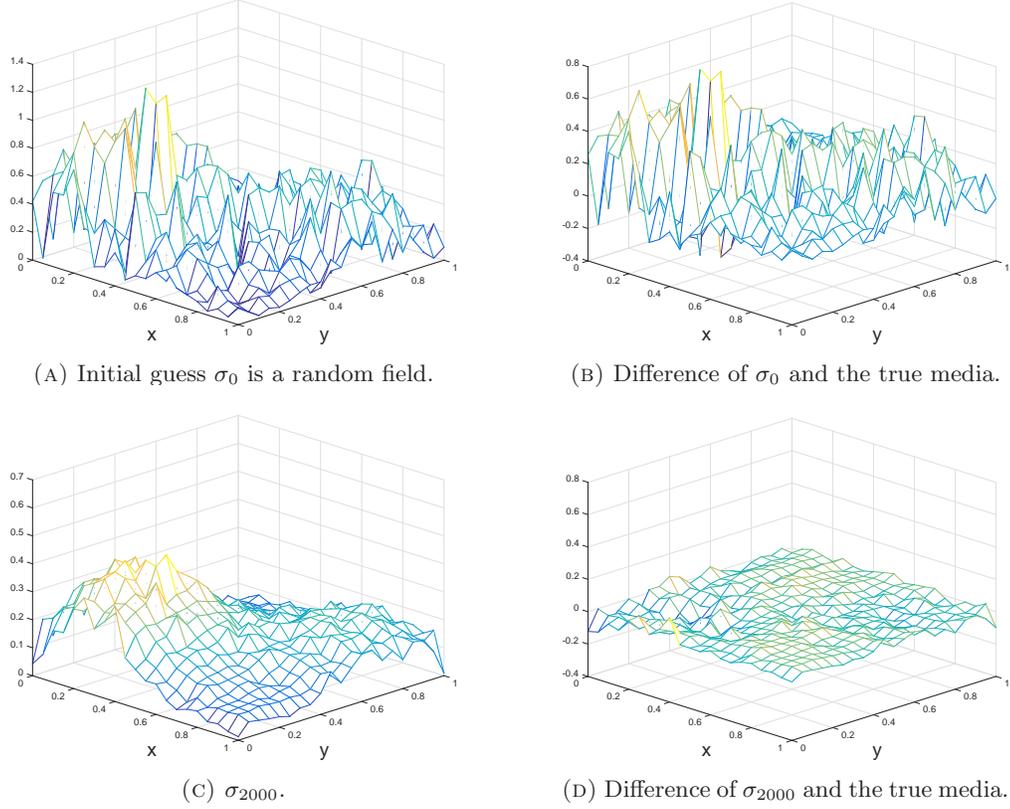

\centering

\begin{subfigure}[t]{0.35\textwidth}
\centering
\includegraphics[width=\textwidth]{Init_Ran.eps}
\caption{Initial guess $\sigma_0$ is a random field.}
\label{fig:Init_Ran}
\end{subfigure}%
\hspace{0.5in}
\begin{subfigure}[t]{0.35\textwidth}
\centering
\includegraphics[width=\textwidth]{Dif_Init_Ran.eps}
\caption{Difference of $\sigma_0$ and the true media.}
\label{fig:Dif_Init_Ran}
\end{subfigure}

\bigskip 

\begin{subfigure}[t]{0.35\textwidth}
\centering
\includegraphics[width=\textwidth]{Final_Ran.eps}
\caption{$\sigma_{2000}$.}
\label{fig:Final_Ran}
\end{subfigure}%
\hspace{0.5in}
\begin{subfigure}[t]{0.35\textwidth}
\centering
\includegraphics[width=\textwidth]{Dif_Final_Ran.eps}
\caption{Difference of $\sigma_{2000}$ and the true media.}
\label{fig:Dif_Final_Ran}
\end{subfigure}

\caption{Nonlinear setting with initial guess being a random field.}
\label{fig:Ran}
\end{figure*}

\begin{figure}
\centering
\includegraphics[width=0.5\textwidth]{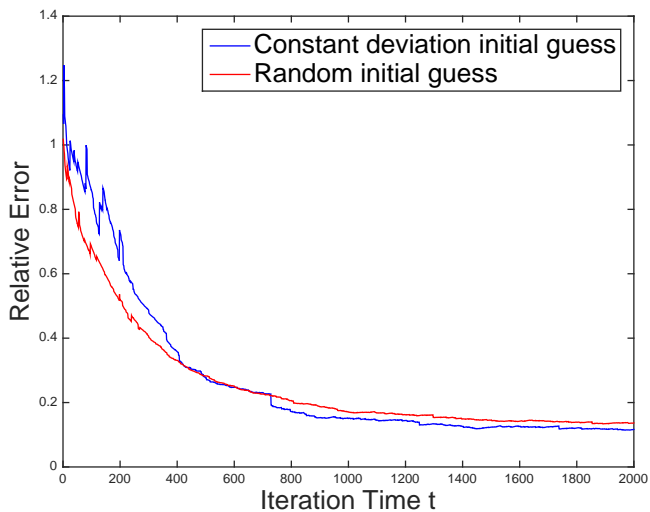}
\caption{Nonlinear setting. The convergence of relative error in time. We see that the error decays almost exponentially fast at the beginning with small fluctuations and gradually saturate. The learning rate $\eta_n$ is extremely small after $1000$ times steps and the decay significantly slows down.}
\label{fig:ReEr}
\end{figure}

\subsection{Linear Case}
We use the same data set in the linearized setting. The background state is given as proportional to the real media $\sigma_0=0.95\sigma$, and thus the to-be-recovered perturbed media $\tilde{\sigma}$, by definition~\eqref{eqn:sigma_tilde} ranges from $0.0025$ to $0.0225$. We choose same regularization coefficient $\alpha=1$. We also test the problem using the constant learning rate $\eta_0= 0.0002$ and the learning rate recommended in~\cite{Bottou2010}: $\eta_n = \frac{\eta_0}{1+\eta_0\alpha n}$ with $\eta_0 = 0.0002$.

We once again use constant deviation and random deviation as the initial guess for the SGD algorithm. For constant deviation initial guess we set $\tilde{\sigma}_0 = \tilde{\sigma}+0.0111$ whereas for random initial guess we set $\tilde{\sigma}_0 = \tilde{\sigma}R$ with $R\in \mathbb{R}^{21\times 21}$ drew its components from uniform distribution $U([1, 3])$.

As presented in Algorithm~\ref{alg:linear}, several \textit{offline} adjoint problems are pre-computed using background state $\sigma_0$ with Dirac delta outflow boundary conditions. In each iteration, only one forward problem is solved using background state $\sigma_0$ and random input $\phi^{(\gamma^n)}$ for $f_0(x,v;\phi^{(\gamma^n)})$. We run SGD algorithm with 20000 iterations. The numerical results are demonstrated in Figure~\ref{fig:Lin_Const} and Figure~\ref{fig:Lin_Ran}. They have constant and random deviation as the initial guess respectively. The decay of the relative error for both types of learning rates are shown in Figure~\ref{fig:Lin_ReEr}. In Figure~\ref{fig:Large_Deviation} we plot and compare the convergence of the error when the initial guess largely deviates from the true solution: $\sigma_0 = 0.2000$. The initial relative error is as large as $17.12$.

Comparing to the nonlinear case, the convergence of relative error requires more iterations as here we aim to recover the small residue $\tilde{\sigma}= \sigma-\sigma_0$, which is much smaller than $\sigma$.

\begin{figure*}
\centering

\begin{subfigure}[t]{0.35\textwidth}
\centering
\includegraphics[width=\textwidth]{Lin_Init_Const.eps}
\caption{Initial guess $\tilde{\sigma}_0$ is a constant deviation.}
\label{fig:Lin_Init_Const}
\end{subfigure}%
\hspace{0.5in}
\begin{subfigure}[t]{0.35\textwidth}
\centering
\includegraphics[width=\textwidth]{Lin_Dif_Init_Const.eps}
\caption{Difference of $\tilde{\sigma}_0$ with the true media $\tilde{\sigma}$.}
\label{fig:Lin_Dif_Init_Const}
\end{subfigure}

\bigskip 

\begin{subfigure}[t]{0.35\textwidth}
\centering
\includegraphics[width=\textwidth]{Lin_Final_Const.eps}
\caption{$\tilde{\sigma}_{20000}$.}
\label{fig:Lin_Final_Const}
\end{subfigure}%
\hspace{0.5in}
\begin{subfigure}[t]{0.35\textwidth}
\centering
\includegraphics[width=\textwidth]{Lin_Dif_Final_Const.eps}
\caption{Difference of $\tilde{\sigma}_{20000}$ with the true media $\tilde{\sigma}$.}
\label{fig:Lin_Dif_Final_Const}
\end{subfigure}

\caption{Linearized setting with initial guess being a constant deviation from the true media.}
\label{fig:Lin_Const}
\end{figure*}

\begin{figure*}
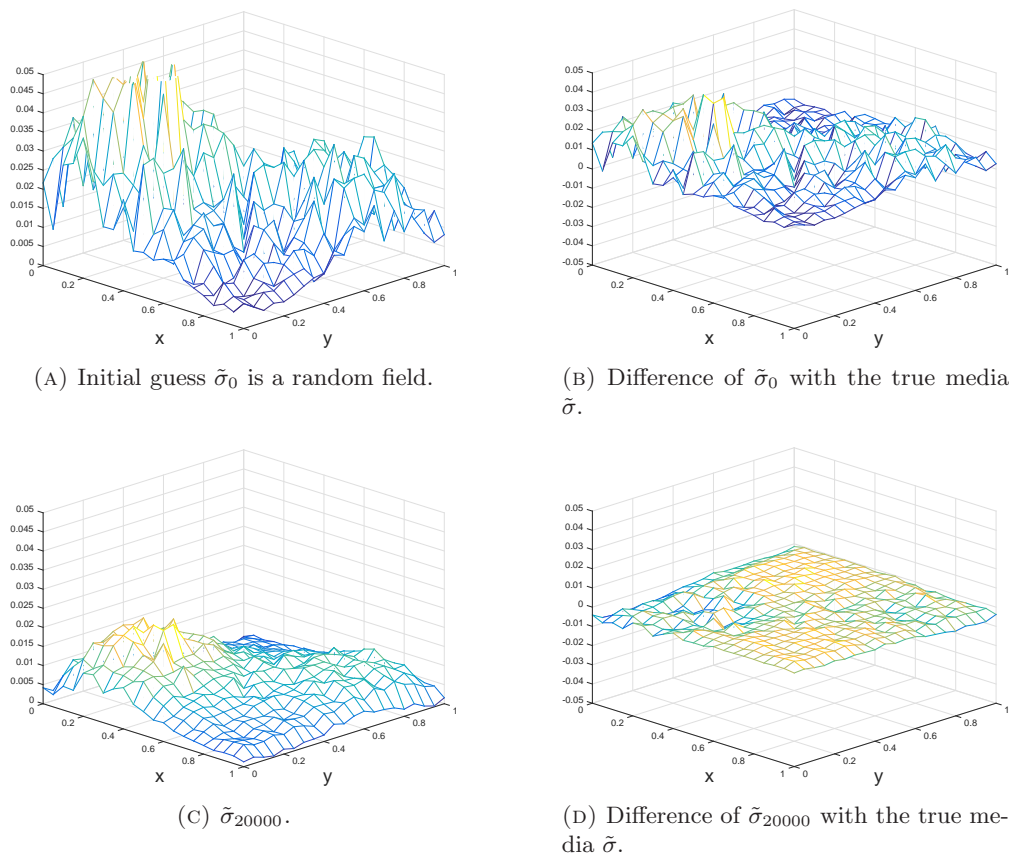

\centering

\begin{subfigure}[t]{0.35\textwidth}
\centering
\includegraphics[width=\textwidth]{Lin_Init_Ran.eps}
\caption{Initial guess $\tilde{\sigma}_0$ is a random field.}
\end{subfigure}%
\hspace{0.5in}
\begin{subfigure}[t]{0.35\textwidth}
\centering
\includegraphics[width=\textwidth]{Lin_Dif_Init_Ran.eps}
\caption{Difference of $\tilde{\sigma}_0$ with the true media $\tilde{\sigma}$.}
\end{subfigure}

\bigskip 

\begin{subfigure}[t]{0.35\textwidth}
\centering
\includegraphics[width=\textwidth]{Lin_Final_Ran.eps}
\caption{$\tilde{\sigma}_{20000}$.}
\end{subfigure}%
\hspace{0.5in}
\begin{subfigure}[t]{0.35\textwidth}
\centering
\includegraphics[width=\textwidth]{Lin_Dif_Final_Ran.eps}
\caption{Difference of $\tilde{\sigma}_{20000}$ with the true media $\tilde{\sigma}$.}
\end{subfigure}

\caption{Linearized setting with initial guess being a random field.}
\label{fig:Lin_Ran}
\end{figure*}

\begin{figure}
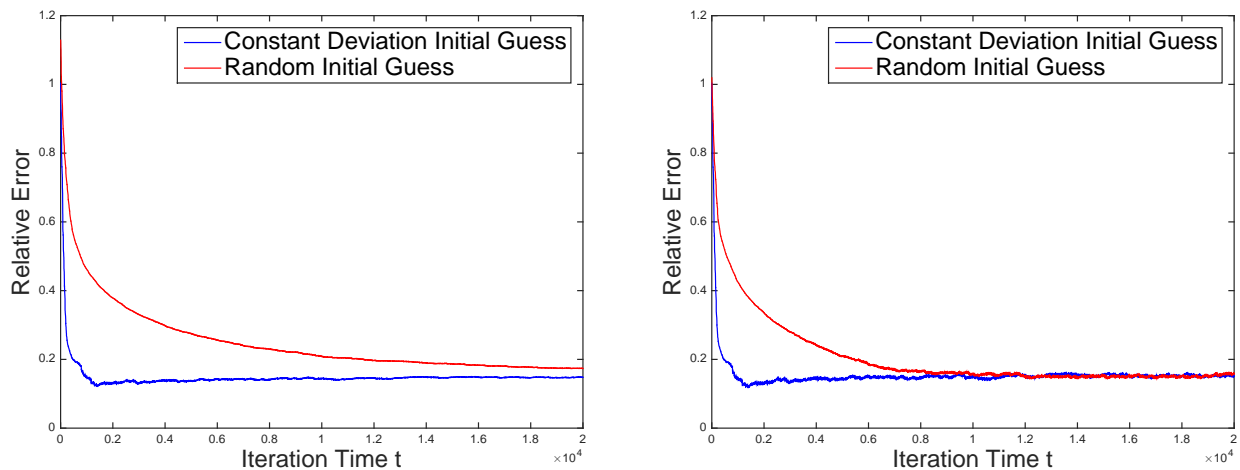

\centering
\begin{subfigure}{0.5\textwidth}
	\includegraphics[width=0.9\textwidth]{Lin_RelativeError.eps}
\end{subfigure}%
~
\begin{subfigure}{0.5\textwidth}
	\includegraphics[width=0.9\textwidth]{ConLR_Lin_RelativeError.eps}
\end{subfigure}
\caption{Linearized setting. The convergence of relative error in time. The error decays almost exponentially fast at the beginning with small fluctuations and gradually saturate. The two panels are for changing-in-time learning rate and the constant learning rate respectively.}
\label{fig:Lin_ReEr}
\end{figure}

\begin{figure}
\centering
\begin{subfigure}{0.5\textwidth}
	\includegraphics[width=0.9\textwidth]{Large_Deviation.eps}
\end{subfigure}%
~
\begin{subfigure}{0.5\textwidth}
	\includegraphics[width=0.9\textwidth]{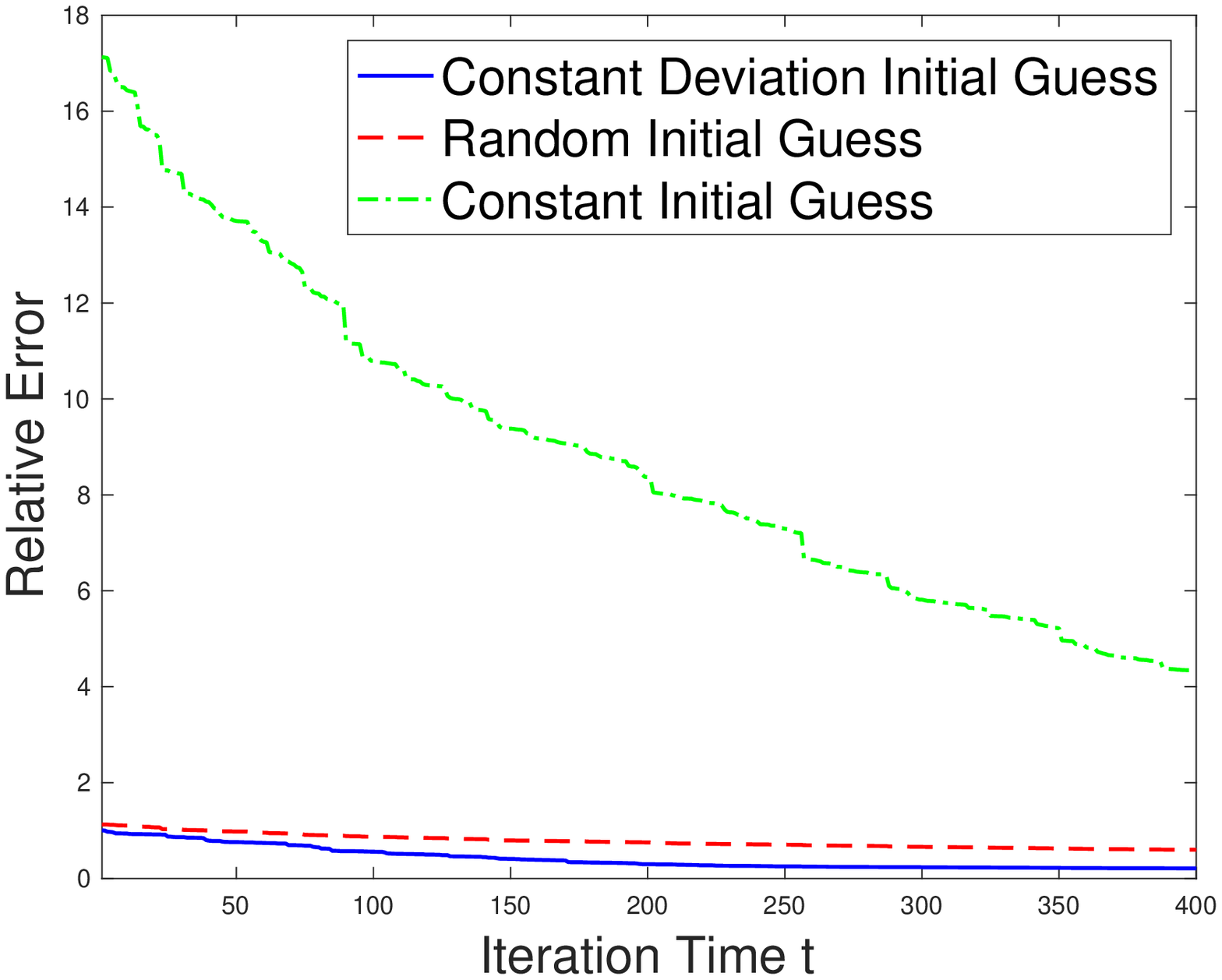}
\end{subfigure}
\caption{Linearized setting. The green dashed line shows the convergence of relative error with the initial guess far away from the true solution. The plot on the right is the zoom-in to the first $400$ steps. The oscillation introduced from the stochasticity in the algorithm is obvious.}
\label{fig:Large_Deviation}
\end{figure}

\subsection{Numerical cost study}
We dedicate this subsection for comparing numerical cost of SGD and the classical GD method. Initial guess is set as $\sigma_0 = \sigma R$ with $R\in\mathbb{R}^{21\times 21}$ drew from uniform distribution $U([0.1,3.1])$. Regularizer $\alpha = 1$ and learning rate $\eta_0 = 0.0044$. Both SGD and GD are used for the optimizer with the sample size $N$ being $100$, $200$ and $400$. The computation is terminated once error tolerance $\text{TOL}=0.2$ is reached, or maximum number of iteration is achieved. We set maximum number of iteration $2000$ for SGD and $100$ for GD.

In Table ~\ref{tab:cost_SGD} we record the number of RTEs that need to compute per iteration, the number of iterations needs to achieve convergence, and the total number RTEs computed for all three sample sizes, and both methods. Note that in each iteration, SGD requires computation of one forward RTE~\eqref{eqn:forward} and one dual RTE~\eqref{eqn:dual}, while GD requires computation of $N$ forward and $N$ duals. Note also that with $N=100$ both SGD and GD fail to converge before achieving the maximum number of allowed iterations.

%
%

\begin{table}[h!]	
\centering
\begin{tabular}{ | c | c| c| c | c| c| c |c|}
\hline
\multirow{2}{*}{$N$} & \multicolumn{3}{|c|}{SGD} & \multicolumn{3}{|c|}{GD} &  \multirow{2}{*}{ratio}\\

\cline{2-7}
&\text{RTE per iteration}&\text{ iteration} & \text{total RTEs}&\text{RTE per iteration}&\text{ iteration} & \text{total RTEs} & \\
\hline
100 & 2& 2000& 4000	& 200& 100& 20000 & $20.0\%$\\
\hline
200 & 2&1047 & 2094 & 400& 87& 34800 & $6.02\%$\\
\hline
400 & 2& 935& 1870 & 800& 85& 68000 & $2.75\%$ \\
\hline
\end{tabular}
\caption{Numerical cost comparison: we compare the number of RTEs needs to be computed per iteration, the number of iterations needed for convergence, and the total amout of RTEs requried using SGD and GD. The last column shows the cost ratio. Larger sample size $N$ provides bigger savings.}
\label{tab:cost_SGD}
\end{table}

\subsection{Absorption coefficient recovery}
We recover the absorption coefficient in this subsection following the strategy in Remark~\ref{rmk:absorption}. The scattering coefficient is set as $\sigma_s(x_1,x_2) = 1$ and the to-be-recovered absorption coefficient is set as:
\begin{equation}
\sigma_a(x_1,x_2) = \frac{1}{20}\left[1+8\exp\left(-10(x_1-\frac{1}{4})^2-10(x_2-\frac{1}{4})^2\right)+4\exp\left(-10(x_1-\frac{3}{4})^2-10(x_2-\frac{3}{4})^2\right)\right]\,.
\end{equation}
as plotted in Figure \ref{fig:RealSigma}. 1000 data points $\{(\phi^{(j)},\psi^{(j)}):1\leq j\leq 1000\}$ are prepared. Numerically to run SGD, we set the regularization coefficient $\alpha = 1$, and the learning rate $\eta_n = \frac{\eta_0}{1+\eta_0\alpha n}$ with $\eta_0 = 0.0441$. Two initial guesses are made: one initial guess is a constant away from the true media $\sigma_0 = \sigma + 0.18$, and another being a random initial $\sigma_0 = \sigma R$. The numerical solution after $2000$ iterations are presented in Figure~\ref{fig:Abs_Const} and Figure~\ref{fig:Abs_Ran} for constant deviation and random deviation initial guesses respectively. In Figure \ref{fig:Abs_ReEr} we show the decay of relative errors with respect to the time steps.

\begin{figure*}
	\centering
	
	\begin{subfigure}[t]{0.35\textwidth}
		\centering
		\includegraphics[width=\textwidth]{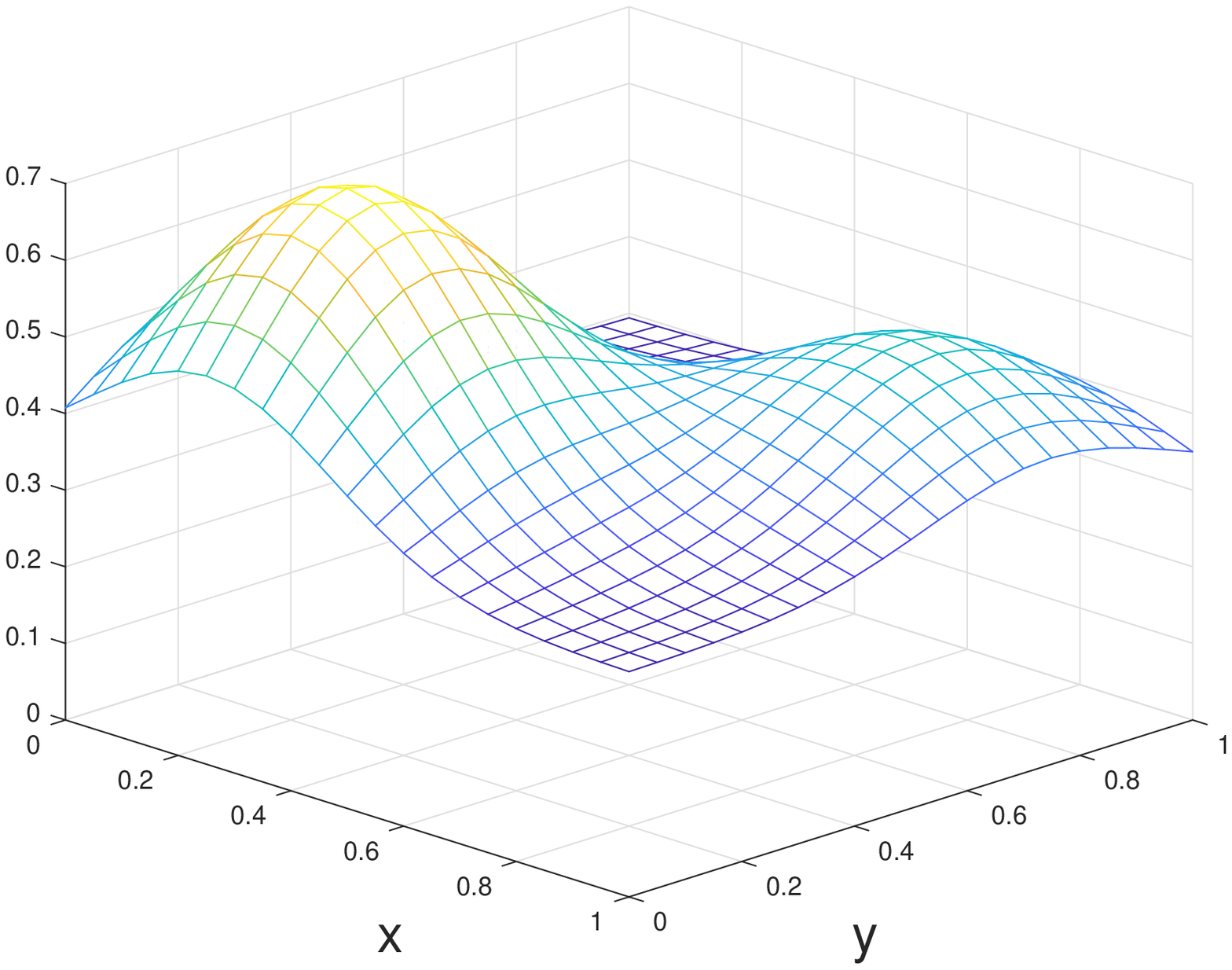}
		\caption{Initial guess $\sigma_0$ is a constant deviation from the true media.}
		\label{fig:Abs_Init_Const}
	\end{subfigure}%
\hspace{0.5in}
	\begin{subfigure}[t]{0.35\textwidth}
		\centering
		\includegraphics[width=\textwidth]{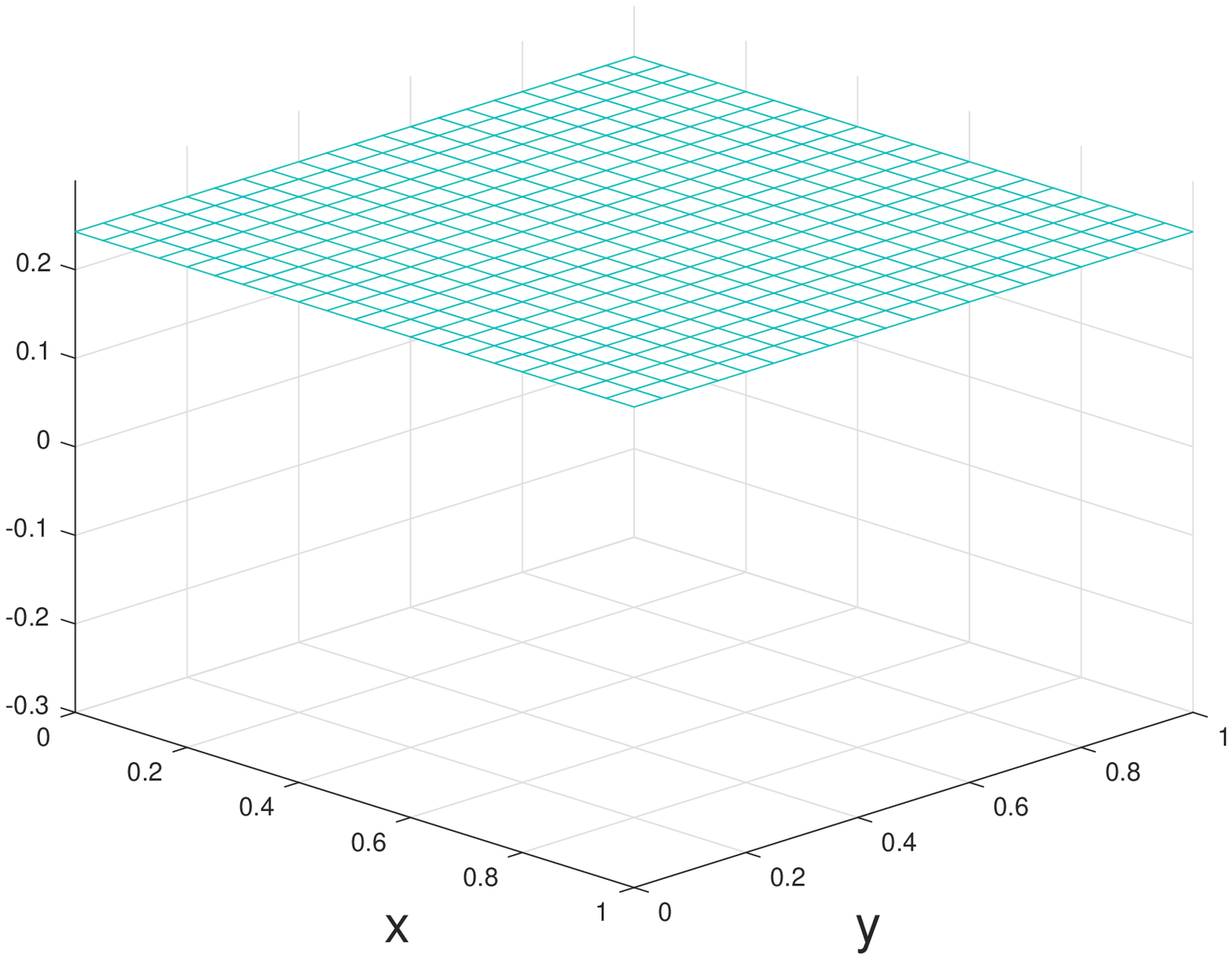}
		\caption{Difference of $\sigma_0$ and the true media.}
		\label{fig:Abs_Dif_Init_Const}
	\end{subfigure}
	
	\bigskip 
	
	\begin{subfigure}[t]{0.35\textwidth}
		\centering
		\includegraphics[width=\textwidth]{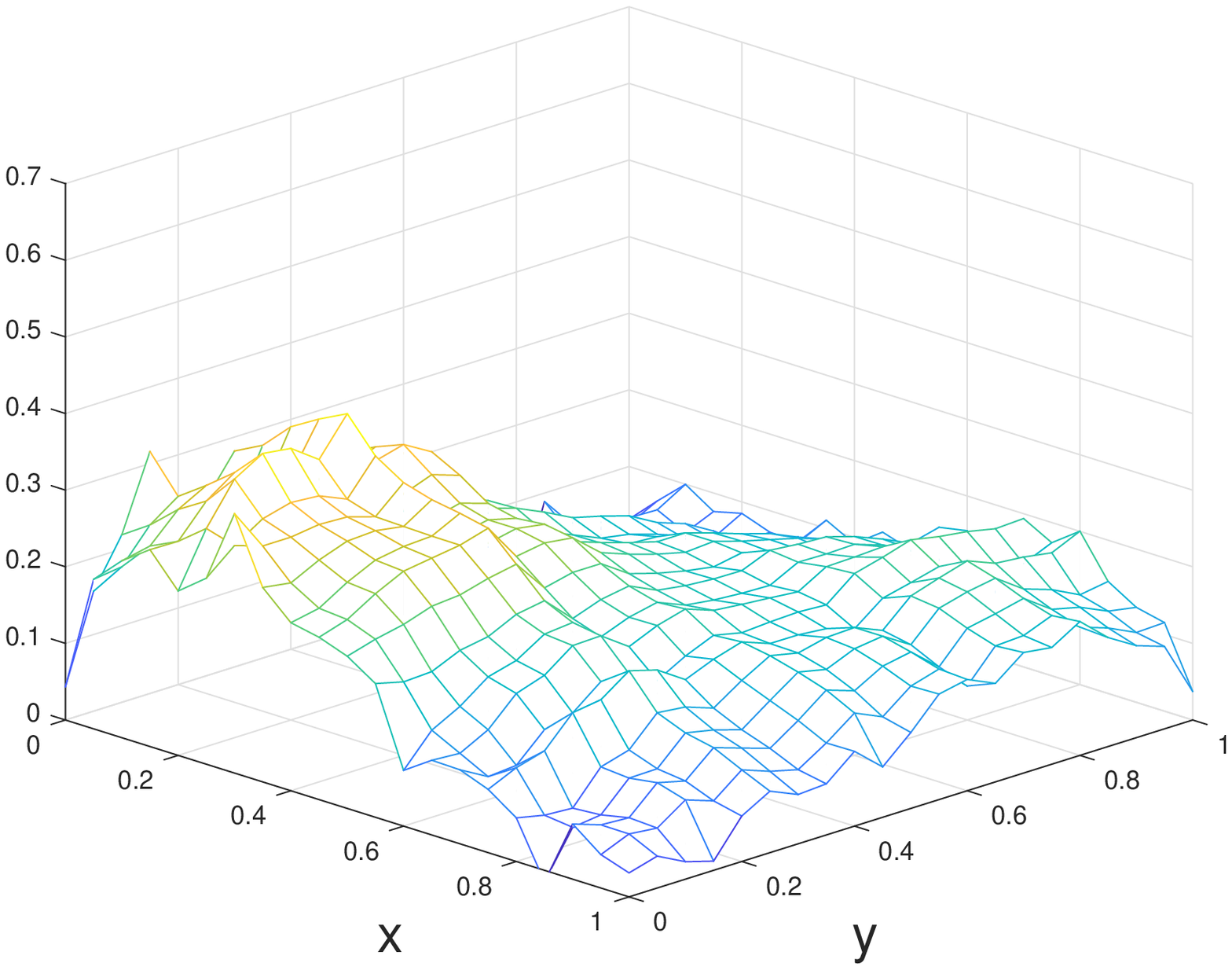}
		\caption{$\sigma_{2000}$.}
		\label{fig:Abs_Final_Const}
	\end{subfigure}%
\hspace{0.5in}
	\begin{subfigure}[t]{0.35\textwidth}
		\centering
		\includegraphics[width=\textwidth]{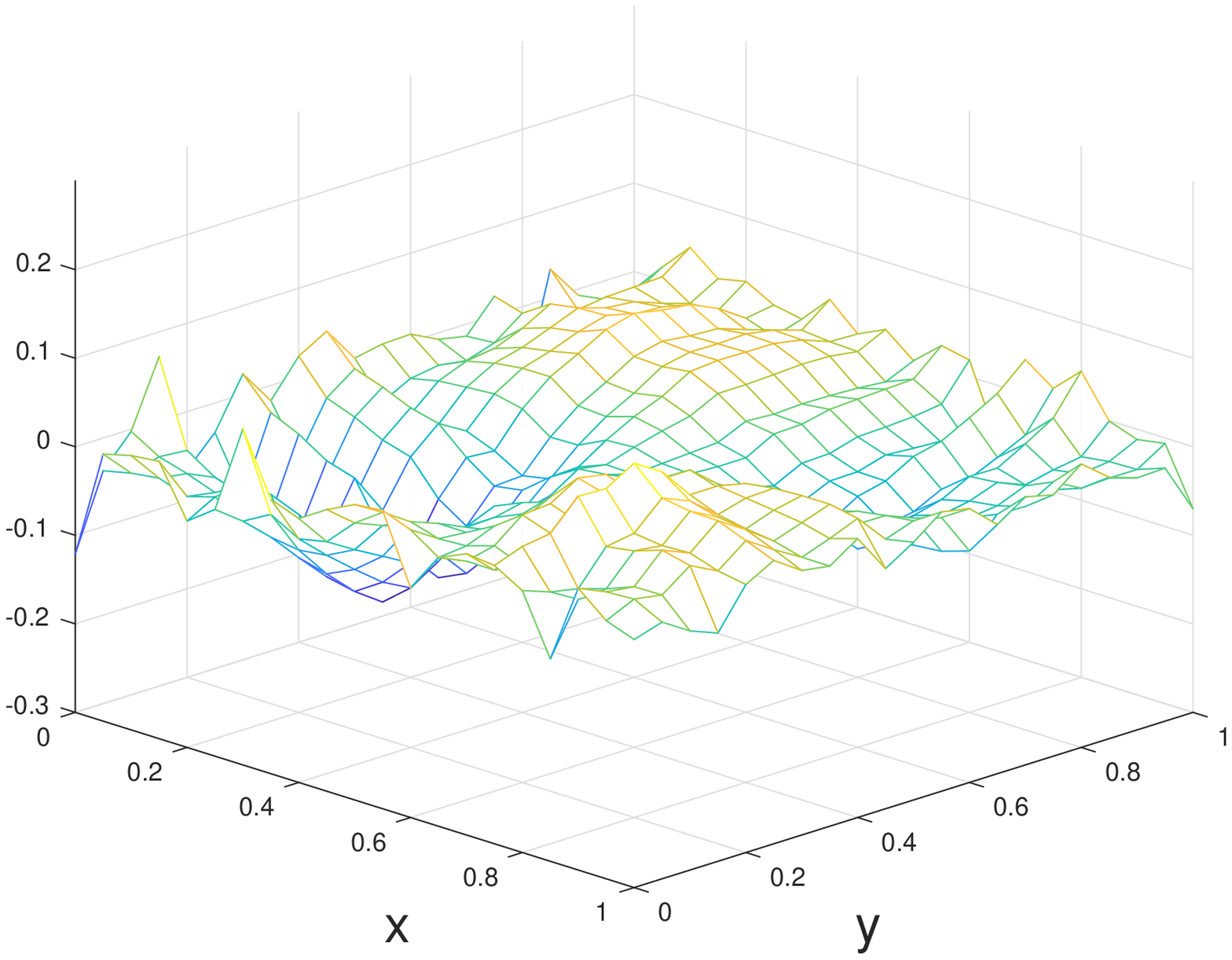}
		\caption{Difference of $\sigma_{2000}$ and the true media.}
		\label{fig:Abs_Dif_Final_Const}
	\end{subfigure}
	
	\caption{The plots show the absorption coeffient recovery. The two plots on the left panel show the media at initial time step and after $2000$ iterations. The errors are shown in the two plots on the right.}
	\label{fig:Abs_Const}
\end{figure*}

\begin{figure*}
	\centering
	
	\begin{subfigure}[t]{0.35\textwidth}
		\centering
		\includegraphics[width=\textwidth]{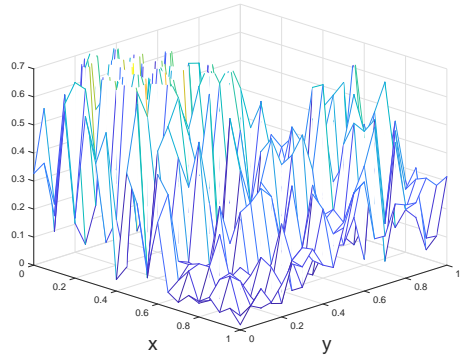}
		\caption{Initial guess $\sigma_0$ is a random field.}
		\label{fig:Abs_Init_Ran}
	\end{subfigure}%
\hspace{0.5in}
	\begin{subfigure}[t]{0.35\textwidth}
		\centering
		\includegraphics[width=\textwidth]{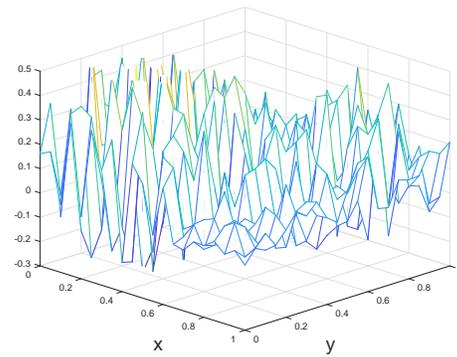}
		\caption{Difference of $\sigma_0$ and the true media.}
		\label{fig:Abs_Dif_Init_Ran}
	\end{subfigure}
	
	\bigskip 
	
	\begin{subfigure}[t]{0.35\textwidth}
		\centering
		\includegraphics[width=\textwidth]{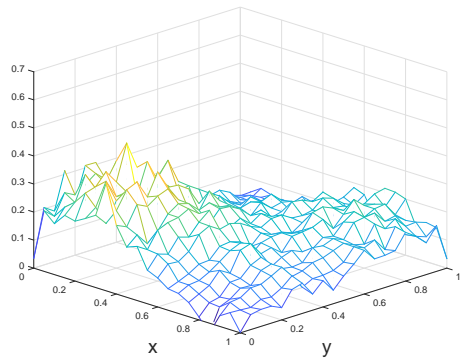}
		\caption{$\sigma_{2000}$.}
		\label{fig:Abs_Final_Ran}
	\end{subfigure}%
\hspace{0.5in}
	\begin{subfigure}[t]{0.35\textwidth}
		\centering
		\includegraphics[width=\textwidth]{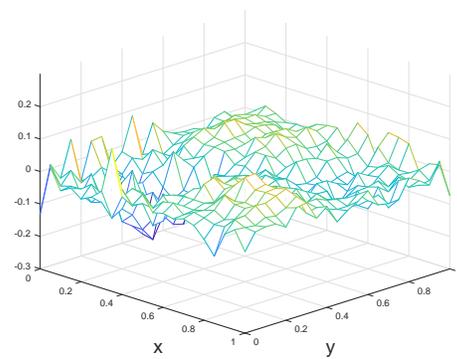}
		\caption{Difference of $\sigma_{2000}$ and the true media.}
		\label{fig:Abs_Dif_Final_Ran}
	\end{subfigure}
	
	\caption{SGD is used to recover the absorption coeffient with initial guess being a random field. The media given at the initial step and after $2000$ iterations are plotted, together with the errors.}
	\label{fig:Abs_Ran}
\end{figure*}

\begin{figure}
	\centering
	\includegraphics[width=0.5\textwidth]{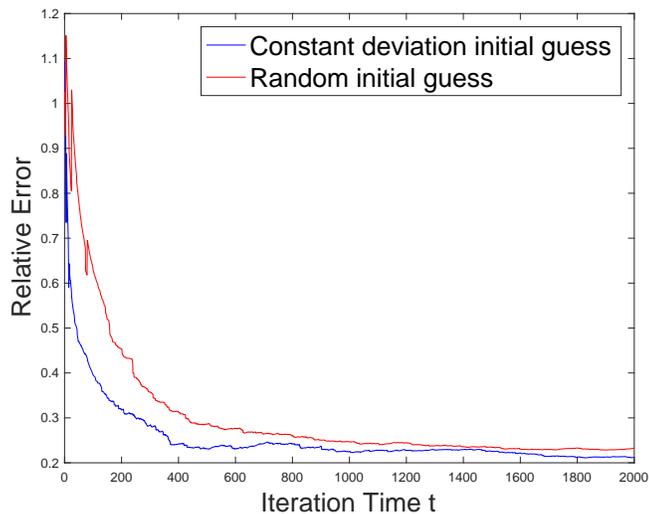}
	\caption{Absorption coefficient recovery. With respect to time steps, the relative error decays almost exponentially in time at the beginning with some flucturation given by the stochastic nature of the algorithm.}
	\label{fig:Abs_ReEr}
\end{figure}

\newpage
\bibliographystyle{siam}
\bibliography{inverse_SGD}

\end{document}